\newcommand{\ad}[1]{{\bf [~AD:\ } {\em \textcolor{blue}{#1}}{\bf~]}}
\let\origsection=\section \def\section{\@ifstar{\origsection*}{\mysection}}
\def\mysection{\@startsection{section}{1}\z@{.7\linespacing\@plus\linespacing}{.5\linespacing}{\normalfont\scshape\centering\S}}
\newcommand*\circled[1]{\tikz[baseline=(char.base)]{\node[shape=circle,draw,inner sep=1pt] (char) {#1};}}
\newcolumntype{C}[1]{>{\centering\arraybackslash}p{#1}}
\definecolor{codelightgray}{gray}{0.8}
\definecolor{codeverylightgray}{gray}{0.9}
\renewcommand{\PrintDOI}[1]{\doi{#1}}
\numberwithin{equation}{section}
\numberwithin{figure}{section}
\let\polishlcross=\l
\def\l{\ifmmode\ell\else\polishlcross\fi}
\def\paragraph#1{%
  \noindent\textbf{#1.}\enspace}
\let\emptyset=\varnothing
\let\setminus=\smallsetminus
\def\moverlay{\mathpalette\mov@rlay}
\def\mov@rlay#1#2{\leavevmode\vtop{   \baselineskip\z@skip \lineskiplimit-\maxdimen
   \ialign{\hfil$\m@th#1##$\hfil\cr#2\crcr}}}
\newcommand{\charfusion}[3][\mathord]{
    #1{\ifx#1\mathop\vphantom{#2}\fi
        \mathpalette\mov@rlay{#2\cr#3}
      }
    \ifx#1\mathop\expandafter\displaylimits\fi}
\DeclareFontFamily{U}  {MnSymbolC}{}
\DeclareSymbolFont{MnSyC}         {U}  {MnSymbolC}{m}{n}
\DeclareFontShape{U}{MnSymbolC}{m}{n}{
    <-6>  MnSymbolC5
   <6-7>  MnSymbolC6
   <7-8>  MnSymbolC7
   <8-9>  MnSymbolC8
   <9-10> MnSymbolC9
  <10-12> MnSymbolC10
  <12->   MnSymbolC12}{}
\DeclareMathSymbol{\powerset}{\mathord}{MnSyC}{180}
\let\epsilon=\varepsilon
\let\eps=\epsilon
\let\rho=\varrho
\let\theta=\vartheta
\let\kappa=\varkappa
\let\E=\EE
\def\NN{{\mathds N}}
\def\PP{{\mathds P}}
\let\Prob=\PP
\newcommand{\cC}{\mathcal{C}}
\newcommand{\cF}{\mathcal{F}}
\theoremstyle{plain}
\newtheorem{thm}{Theorem}[section]
\newtheorem{theorem}[thm]{Theorem}
\newtheorem{prop}[thm]{Proposition}
\newtheorem{claim}[thm]{Claim}
\newtheorem{cor}[thm]{Corollary}
\newtheorem{lemma}[thm]{Lemma}
\theoremstyle{definition}
\newtheorem{dfn}[thm]{Definition}
\newtheorem{prob}[thm]{Problem}
\newcommand{\seq}[1]{\accentset{\rightharpoonup}{#1}}
\let\phi=\varphi
\begin{document}

\title[Powers of Hamiltonian cycles in randomly augmented graphs]{Powers of Hamiltonian cycles in randomly augmented Dirac graphs -- the complete collection}

\author[S. Antoniuk]{Sylwia Antoniuk}
\address{Department of Discrete Mathematics, Adam Mickiewicz University, Pozna\'n, Poland}
\email{antoniuk@amu.edu.pl}

\author[A. Dudek]{Andrzej Dudek}
\address{Department of Mathematics, Western Michigan University, Kalamazoo, MI, USA}
\email{andrzej.dudek@wmich.edu}
\thanks{The second author was supported in part by Simons Foundation Grant \#522400.}

\author[A. Ruci\'nski]{Andrzej Ruci\'nski}
\address{Department of Discrete Mathematics, Adam Mickiewicz University, Pozna\'n, Poland}
\email{\tt rucinski@amu.edu.pl}
\thanks{The third author was supported by Narodowe Centrum Nauki, grant 2018/29/B/ST1/00426}

\begin{abstract}
We  study the powers of Hamiltonian cycles in randomly augmented Dirac graphs, that is, $n$-vertex graphs $G$ with minimum degree at least $(1/2+\eps)n$ to which some random edges are added.  For any Dirac graph and \emph{every} integer $m\ge2$, we accurately estimate  the threshold probability $p=p(n)$ for the event that the random augmentation $G\cup G(n,p)$  contains the $m$-th power of a Hamiltonian cycle. 
\end{abstract}

\maketitle

\section{Introduction}

In this paper we continue the study of powers of Hamiltonian cycles in randomly augmented Dirac graphs initiated by Bohman, Frieze, and Martin in \cite{BFM2003} and developed in \cite{DRRS,ADRRS,NT} and \cite{BPSS2022}.
 For $m\in\NN$ the \emph{$m$-th power~$F^m$} of a~graph~$F$ is defined
as the graph on the same vertex set whose edges join distinct vertices at distance at most~$m$ in~$F$.
The $m$-th power of a path or a cycle will be often called  an \emph{$m$-path} or, respectively, an \emph{$m$-cycle}. Note that the $m$-cycle  on $v$ vertices has  $vm$ edges, while the $v$-vertex $m$-path has $vm-\binom{m+1}2$ edges.

A Hamiltonian cycle in a graph $G$ is a cycle which passes through all vertices of~$G$. Extending the celebrated result of Dirac~\cite{Dirac}, Koml\'os, S\'ark\"ozy, and Szemer\'edi in~\cite{KSS1996,KSS1998} proved that for large $n$ every graph $G$ with $n$ vertices and minimum degree $\delta(G)\ge\tfrac k{k+1}n$ contains the $k$-th power of a Hamiltonian cycle.

\subsection{Thresholds and over-thresholds}

For an integer $k\ge0$, a sequence of $n$-vertex graphs $G=G(n)$ is \emph{$k$-Dirac} if $\delta(G)\ge\left(\tfrac k{k+1}+\epsilon\right)n$ for a constant $\epsilon>0$ and all $n\ge n_0$, for some $n_0\in\mathbb{N}$. For integers $m>k\ge1$, a \emph{$(k,m)$-Dirac threshold} was defined in \cite[Def. 1.1]{ADRRS} with $\alpha=k/(k+1)+\epsilon$ as,  roughly, the smallest function $d(n)$ such that with probability approaching 1, for every $k$-Dirac sequence of graphs $G(n)$ and every $p=p(n)\ge C d(n)$, where $C=C(\epsilon,k)>0$, the union $G(n)\cup G(n,p)$ contains the $m$-th power of a Hamiltonian cycle. If existed, such a threshold function was denoted by $d_{k,m}(n)$. The main feature of that definition was that it was independent of the constant $\eps$ (or $\alpha$) -- only the multiplicative constants depended on $\eps$.

For $k=0$, the threshold  $d_0=n^{-1}$ has been established in \cite{BFM2003}.
In \cite{DRRS} it was proved that $d_{k,k+1}(n)=n^{-1}$ for all $k\ge1$. This was substantially extended in \cite{ADRRS} to cover many other pairs $(k,m)$. In particular, it turned out  that  $d_{k,m}(n)=n^{-1}$ for all $m\le 2k+1$. (This result was independently obtained by Nenadov and Truji\'{c} in~\cite{NT}.) For $m=2$, B\"ottcher, Parczyk, Sgueglia, and Skokan in \cite{BPSS2022} extended this result to a more general setting when $\delta(G)\ge\alpha n$, for all  values of $\alpha\in(0,1)$.

However, a vast majority of pairs $(k,m)$ were still unaccounted for. In this paper we consider the case $k=1$ only and  solve the problem \emph{for all} values of $m$. Along the way it turned out, however, that we  had to come up with a new notion of threshold.

Looking back at the results in \cite{ADRRS}  for just $k=1$, it is  perhaps surprising that only the values of $m\in\{2,3,4,5,8\}$ are covered. Indeed, Theorem 1.2 in \cite{ADRRS} established  the bound $d_{1,m}(n)\le n^{-2/\ell}$ where $\ell=\min\{\ell'\le m: \ell'\ge(m-\ell')(m-\ell'+1)\}$. On the other hand, Theorem 1.3 therein yielded the lower bound $d_{1,m}(n)\ge n^{-2/\left(\lfloor m/2\rfloor+1\right)}$, and these  bounds coincide only for $m=2,3,4$. For $m=8$ they leave a gap $n^{-2/5}\le d_{1,8}(n)\le n^{-1/3}$ which was closed by showing that $d_{1,8}(n)=n^{-1/3}$ (see \cite[Theorem 1.5]{ADRRS}).

The case $m=5$, treated only as a concluding remark in \cite{ADRRS}, was exceptional in that it escaped the notion of threshold as defined there.
Indeed, in Claims 9.1 and 9.2 in \cite{ADRRS} the exponent of $n$  depends on $\epsilon$. More precisely, for each $\eps>0$ there exist two constants $c'=c'(\eps)$ and $c''=c''(\eps)$ such that the transition  from limiting probability zero to one takes place somewhere between $n^{-1/2-c'}$ and $n^{-1/2-c''}$. Moreover, both $c'$ and $c''$ converge to 0 with $\eps\to0$. (For more discussion on thresholds for up to $m=10$, see Section \ref{conrem}\,II.)

Back then viewed  rather as an eccentricity, the case $m=5$ has now become an enlightening indication of the truth in full generality.
Indeed, it turns out that for all $m\not\in\{2,3,4,8\}$, the threshold behavior is similar to that for $m=5$. As our goal has been to determine a threshold function independent of $\eps$, we are in need of a new notion of a threshold.
So, one might view the four ``typical''  cases solved in \cite{DRRS,ADRRS} as the tip of an iceberg which, however, in its depths looks quite different than what the tip might have suggested.

Inspired by the case $m=5$, we now define a new type of threshold. For future references, we do it for all $k\ge1$, although this paper is exclusively devoted to the classical Dirac case of $k=1$. For integers $m\ge1$ and $n\geq m+2$,  the family (or property) $\cC_n^m$
consists of all $n$-vertex graphs~$G$ that contain the $m$-th power  of a Hamiltonian cycle.

We say that a function $d(n)$ is a \emph{$(k,m)$-Dirac over-threshold} if
\begin{itemize}
	\item[(i)] for every $\eps>0$ there exists $\mu>0$  such that for all $n$-vertex graphs $G$ with  $\delta(G)\ge(\tfrac k{k+1}+\eps)n$ and all $p=p(n)\ge d(n)n^{-\mu}$
	$$\lim_{n\to\infty}\Prob(G\cup G(n,p)\in{\mathcal C}^m_n)=1,$$
	and
	\item[(ii)]  for every real $\mu>0$, there exists $\epsilon>0$ and a sequence of $n$-vertex graphs $G_\epsilon=G_\epsilon(n)$ with $\delta(G_\eps)\ge\left(\tfrac k{k+1}+\epsilon\right)n$ such that for every $p=p(n)\le d(n)n^{-\mu}$
	$$\lim_{n\to\infty}\Prob(G_\eps\cup G(n,p)\in{\mathcal C}^m_n)=0.$$

\end{itemize}
If exists, the $(k,m)$-Dirac over-threshold is denoted by $\bar d_{k,m}(n)$. Obviously, the existence of the over-threshold  $\bar d_{k,m}(n)$ excludes the existence of the $(k,m)$-Dirac threshold $d_{k,n}(n)$.
The prefix ``over'' is meant to remind us that the abrupt change in behavior of the probability in question happens just ``below'' the function $d(n)$. The main difference between the $(k,m)$-Dirac threshold as defined in \cite{ADRRS} and the new notion above is that now the dependence on $\epsilon$ is much more substantial. As a drawback, however, for any given $\eps$ we do not obtain a threshold in the classical sense, but a pair of functions bounding it from both sides.

\subsection{Main results}\label{mr}

The thresholds obtained  in \cite{ADRRS} were related to the density of the so called braid graphs (see Definition~\ref{bra} and Fig. \ref{Fig.00}). Given a graph $G$, let $v_G=|V(G)|$ and $e_G=|E(G)|$. For a graph $G$ with $v_G>1$, by \emph{density} and, respectively, \emph{maximum density}, we mean
\[ d_G = \frac{e_G}{v_G-1} \ \ \text{ and } \ \ m_G= \max_{H\subseteq G,v_H>1}d_G.\]
The parameter $d_G$ is often called 1-density.

Roughly speaking, for $0\le r\le \ell$, a braid graph consists of an ordered collection of $\ell$-cliques such that any two consecutive cliques are joined by a~structure called an $r$-bridge having exactly ${r+1\choose 2}$ edges (see Definition~\ref{bri} and Fig. \ref{Fig.0}). It was crucial for the construction used in the proof in \cite{ADRRS} that
\begin{equation}\label{klr}
m= k\ell+r,
\end{equation}
so  that when $k+1$ braids are intertwined and all $\ell$-cliques at distance at most $k$, one from each braid, are fully connected to each other, each vertex has exactly $k\ell+r= m$ neighbors ahead of itself (and, by symmetry, also behind itself). That is, such a structure forms an $m$-path.

 The cases of $k$ and $m$ covered in \cite{ADRRS} were, with the exception of $(1,8)$ and $(2,14)$, exactly those for which one could find integers $\ell\ge2$ and $r\ge0$ with  $r(r+1)\leq\ell$ and such that $(k+1)(\ell-1)\le m\le k\ell+r$ (the R-H-S inequality is due to the monotonicity of  $m$-path containment with respect to $m$). The lower bound on $\ell$ implied that the densest subgraph of a braid was an $\ell$-clique and, consequently, the exponent in the $(k,m)$-Dirac threshold equalled the negated reciprocal of the density $d_{K_{\ell}}$ of the $\ell$-clique, that is, $-2/\ell$. The reason was that for $p\ge n^{-2/\ell}$ there are in $G(n,p)$ plenty of copies of the braid  (c.f., \cite[Proposition 5.8]{ADRRS}, an immediate  corollary of \cite[Theorem 2.2]{ADRRS}), a fact which was a crucial building block in the proof therein.

In this paper  we allow the opposite case, namely $\ell< r(r+1)$ (for $k=1$). It turns out that now the densest subgraph of a braid is the braid itself (see Proposition \ref{<d}). Moreover, another corollary of \cite[Theorem 2.2]{ADRRS}, namely Proposition \ref{5.8} below, implies that, again, there are plenty of copies of braids in $G(n,p)$ with appropriately adjusted $p$ (the exponent of $n$ is now the negated reciprocal of the density of the braid).

We aim at finding an optimal value of $\ell$ for which the (asymptotic) density of the braid determines the $(1,m)$-Dirac over-threshold.
For this sake we introduce a pivotal quantity $f_m(\ell)$.
For  integers $m\ge2$ and $\ell\in[m]$, let
$$f_m(\ell)=\frac1{\ell}\left(\binom \ell2+\binom{m-\ell+1}{2}\right)=\ell+\frac{m^2+m}{2\ell}-m-1.$$
Note that for $k=1$, \eqref{klr} becomes $m=\ell+r$. Thus, for $\ell\ge m/2$, $f_m(\ell)$ equals the average number of edges adjacent to the vertices of the first $\ell$-clique in the braid and, consequently, it equals the asymptotic density of the braid as the number of cliques tends to infinity (see Section \ref{upp} for details).

 Observe that for $\ell>0$
\[
f_m(\ell) = \left( \sqrt{\ell} - \frac{\sqrt{2m^2+2m}}{2\sqrt{\ell}} \right)^2 + \sqrt{2m^2+2m}-m-1
\ge \sqrt{2m^2+2m}-m-1.
\]
Thus, the function $f=f_m$ has a unique global minimum on the \emph{real} interval $(0,m]$ at
\[\lambda_m = \frac{\sqrt{2m^2+2m}}{2}.\]
Hence, for each integer $\ell\in[m]$ we have $f(\ell)\ge \min\left\{f(\lfloor\lambda_m\rfloor), f(\lceil\lambda_m\rceil)\right\}$. Let
\begin{equation}\label{ell_m}
	\ell_m\in\{\lfloor\lambda_m\rfloor, \lceil\lambda_m\rceil\}\quad\mbox{be such that}\quad f(\ell_m)=\min\left\{f(\lfloor\lambda_m\rfloor), f(\lceil\lambda_m\rceil)\right\}.
\end{equation}
As a convention, when $\lfloor\lambda_m\rfloor \neq\lceil\lambda_m\rceil$ but $f(\lfloor\lambda_m\rfloor)= f(\lceil\lambda_m\rceil)$, we always set $\ell_m = \lfloor\lambda_m\rfloor$. For example,  $\lambda_{20}=\sqrt{210}$, $\lfloor\lambda_{20}\rfloor = 14$, $\lceil\lambda_{20}\rceil = 15$, and $f(14)=8=f(15)$, so we set $\ell_{20} = 14$. In general,  $\lambda_m\ge m/2$ for all $m$, and $\ell_m\ge\lambda_m-1\ge m/2$ for $m\ge4$.

We are ready to state the main results of this paper. From now on we abbreviate $(1,m)$-Dirac (over-)threshold to just $m$-Dirac (over-)threshold and use $\bar d_{m}(n)$ for $\bar d_{1,m}(n)$.

\begin{theorem}\label{thm:lower}
For every integer $m\geq 2$ and a real $\mu>0$, there exists $\epsilon>0$ and a~sequence of $n$-vertex graphs $G_\epsilon=G_\epsilon(n)$ with $\delta(G_\epsilon)\ge(\tfrac12+\epsilon)n$ such that for every $p=p(n)\le n^{-1/f(\ell_m)-\mu}$
$$\lim_{n\to\infty}\Prob(G_\epsilon\cup G(n,p)\in{\mathcal C}^m_n)=0.$$
\end{theorem}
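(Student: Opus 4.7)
The plan is a first-moment subgraph-counting argument built on a carefully designed extremal Dirac graph $G_\epsilon$. Given $\mu > 0$, fix $\epsilon = \epsilon(\mu, m) > 0$ sufficiently small. Partition $V = A \sqcup B$ with $|A|, |B|$ both of order $n/2$; let $G_\epsilon[B]$ be complete, let $G_\epsilon[A]$ be nearly complete but missing a tailored ``obstruction'' (for example, the edge set of a balanced $m$-partite graph) that prevents $G_\epsilon[A]$ from containing an $m$-th power of a Hamilton path on $|A|$ vertices, and let $G_\epsilon[A,B]$ be a bipartite graph of density $\Theta(\epsilon)$ chosen so that every vertex has degree at least $(\tfrac12+\epsilon)n$. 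The role of the obstruction is to preclude the ``trivial'' single-block embedding and force the adversary to alternate $A$- and $B$-blocks along the Hamilton cycle.

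Given any embedding $\phi\colon V(C^m_n)\to V$, declare an edge of $C^m_n$ \emph{critical} (with respect to $\phi$) if its image under $\phi$ is not in $G_\epsilon$; such edges form a random subgraph $F_\phi$ that must be contained in $G(n,p)$. The structural heart of the argument is: for every $\phi$, summing the $m$-th power crossings over the alternating $A$- and $B$-block pattern of $\phi$ and invoking Proposition~\ref{<d} (which identifies the braid itself as the densest subgraph of a braid in the regime $\ell < r(r+1)$), $F_\phi$ contains a long braid-type substructure whose $1$-density satisfies $m_{F_\phi} \ge f_m(\ell) - O(\epsilon)$ for a block-length parameter $\ell$ chosen by the adversary. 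Since $f_m$ is convex around its real minimizer $\lambda_m$, the adversary's best integer choice is $\ell = \ell_m$, yielding
\[
m_{F_\phi} \ge f_m(\ell_m) - O(\epsilon).
\]

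The first-moment step then proceeds via a union bound over embeddings $\phi$, grouped by the isomorphism type and placement of $F_\phi$. The number of embeddings with a given critical subgraph is bounded by $n^{v(F_\phi)}$, each contributing a factor $p^{e(F_\phi)}$; combining this with the density bound $e(F_\phi) \ge (v(F_\phi)-1)(f_m(\ell_m) - O(\epsilon))$ and $p \le n^{-1/f_m(\ell_m) - \mu}$ yields
\[
n^{v(F_\phi)}\, p^{e(F_\phi)} \;\le\; n^{\,1 - (v(F_\phi)-1)\,[\mu f_m(\ell_m) - O(\epsilon)]},
\]
which decays to $0$ as $v(F_\phi) \to \infty$, provided $\epsilon$ is small enough that $\mu f_m(\ell_m) > O(\epsilon)$. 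Summing over the relevant embedding types preserves the $o(1)$ bound on the full probability.

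The central technical difficulty lies in the uniform density bound $m_{F_\phi} \ge f_m(\ell_m) - O(\epsilon)$: the adversary has several degrees of freedom (block pattern, vertex-to-position assignment within blocks, and placement of transition vertices), and one must show that no combination yields a critical subgraph less dense than a braid. This reduces, via Proposition~\ref{<d} and the integer-minimization of $f_m$ around $\lambda_m$, to a combinatorial optimization whose minimum is realized at $\ell_m$. A subsidiary challenge is ensuring that the $\Theta(\epsilon)$-density bipartite part $G_\epsilon[A,B]$ does not offer the adversary too much savings in critical edges; the choice $\epsilon \ll \mu$ controls this. The obstruction within $G_\epsilon[A]$, forcing many critical edges in single-block embeddings, is what rules out the otherwise dominant $(1,1)$ strategy and constitutes the most delicate design decision in the construction.
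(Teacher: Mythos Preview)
Your construction of $G_\epsilon$ is backwards relative to what is needed, and this is a genuine gap. You make $G_\epsilon[A]$ and $G_\epsilon[B]$ (nearly) complete and $G_\epsilon[A,B]$ sparse, so your critical edges are the \emph{cross} edges of the $m$-cycle (plus a handful of obstruction edges inside $A$). But the cross edges of an $m$-path with block structure do not form braids; the braids sit \emph{inside} each colour class. Consequently, Proposition~\ref{<d} says nothing about $m_{F_\phi}$ for your $F_\phi$, and the asserted bound $m_{F_\phi}\ge f_m(\ell_m)-O(\epsilon)$ has no justification. Worse, with your $G_\epsilon$ the adversary can take two blocks of length about $n/2$ each; then the only critical cross edges lie near the two transitions, $O(m^2)$ of them in all, and everything else depends on your vaguely specified ``obstruction'' inside $A$. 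You have not shown (and it is far from clear) that any such obstruction simultaneously keeps $\delta(G_\epsilon)\ge(\tfrac12+\epsilon)n$ and forces $\Omega(f(\ell_m)\cdot n)$ critical edges for every embedding.

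The paper's construction goes the other way: $G_\epsilon$ is the complete bipartite graph on $X\cup Y$ together with two tiny ``helper'' bipartite pieces inside $X$ and inside $Y$, each touching only $\lfloor\epsilon n\rfloor$ vertices. After deleting these $2\lfloor\epsilon n\rfloor$ helper vertices, any Hamiltonian $m$-cycle breaks into at most $2\lfloor\epsilon n\rfloor$ $m$-paths, one of which has length $L\ge 1/(3\epsilon)$; on that path \emph{every} within-class edge must lie in $G(n,p)$. The structural work is then done by Lemma~\ref{lem:pathedges} (not Proposition~\ref{<d}, which is used only for the \emph{upper} bound): it shows that for any bipartition of an $m$-path with no monochromatic run of length $m{+}1$, the within-class edges number at least $f(\ell_m)L-2m^2$. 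A single first-moment count on subgraphs of $G(n,p)$ with $L$ vertices and that many edges then finishes the proof; no union bound over embeddings $\phi$ is needed. Your plan invokes the wrong proposition, proposes the wrong host graph, and leaves the key density inequality entirely unproved.
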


\noindent Theorem \ref{thm:lower} implies that, if an over-threshold exists, $\bar d_{m}(n)\ge n^{-1/f(\ell_m)}$. Below we provide an upper bound.

\begin{theorem}\label{thm:upper}
For all integers $m$ and $\ell$ such that $m/2\le\ell\le m-1$ and $\ell<(m-\ell)(m-\ell+1)$ and for all $\eps>0$ there exists $\mu>0$  such that for all $n$-vertex graphs $G$ with  $\delta(G)\ge(1/2+\eps)n$ and all $p=p(n)\ge n^{-1/f(\ell)-\mu}$
$$\lim_{n\to\infty}\Prob(G\cup G(n,p)\in{\mathcal C}^m_n)=1.$$
\end{theorem}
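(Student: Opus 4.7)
The plan is to adapt the absorption framework developed in \cite{DRRS,ADRRS} to the ``heavy'' regime $\ell<(m-\ell)(m-\ell+1)$, in which the braid itself (rather than its $\ell$-clique) is its densest subgraph. Fix $\eps>0$, set $r:=m-\ell$, so that $m=\ell+r$, and for each integer $s\ge 1$ let $B_s$ denote the braid consisting of $s$ consecutive $\ell$-cliques joined by $r$-bridges. By Proposition~\ref{<d}, the $1$-density of $B_s$ tends to $f(\ell)$ as $s\to\infty$. For a suitably small $\mu>0$ and $p\ge n^{-1/f(\ell)-\mu}$, Proposition~\ref{5.8} then guarantees that whp $G(n,p)$ contains many vertex-disjoint copies of $B_s$, uniformly spread over $V(G)$.

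I would decompose $p$ by a sprinkling argument into independent parts $p=p_0+p_1+p_2+p_3$, each satisfying $p_i\ge n^{-1/f(\ell)-\mu'}$ for some $\mu'<\mu$, and assign them to four stages: (i) building an absorbing $m$-path $P_A$ of linear size in $G\cup G(n,p_0)$ with the property that any small set $U$ of external vertices can be absorbed into $P_A$; (ii) reserving a small vertex set $R$ together with enough random edges through it to serve as a buffer; (iii) applying the Komlós--S\'ark\"ozy--Szemer\'edi theorem \cite{KSS1998} to $G$ restricted to $V(G)\sm(V(P_A)\cup R)$, which still has minimum degree at least $(\tfrac12+\tfrac{\eps}{2})n$, to cover all but a small leftover $U_0\subseteq R$ by a single $m$-path $P$; (iv) using $G\cup G(n,p_3)$ together with $R$ to join the ends of $P$ and $P_A$ and finally absorbing $U_0$ into $P_A$ to close the $m$-cycle.

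The heart of the argument is the construction of $P_A$. For each vertex $v$, an absorbing gadget is a small $m$-path with a \emph{switch}: two $m$-paths on almost the same vertex set (differing by $v$) sharing the same endpoints, one containing $v$ and one avoiding it. Such a gadget is naturally built by intertwining two parallel braids as in \eqref{klr}: the ``rare'' braid coming from $G(n,p_0)$, and the ``dense'' braid together with all cross-bipartite edges coming from $G$ (both abundant thanks to the Dirac condition). The spread-out property of braid copies ensures that every $v$ has polynomially many vertex-disjoint gadgets, and a random selection plus sprinkled connectors concatenates one gadget per vertex into $P_A$.

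The main obstacle is the absorber construction itself in this regime. In the light regime handled in \cite{ADRRS}, connectors inside the gadget could be built out of $\ell$-cliques alone, which appear in $G(n,p)$ already at $p=n^{-2/\ell}$; since $\ell<r(r+1)$ here, $n^{-2/\ell}$ lies below $n^{-1/f(\ell)}$ and this shortcut is unavailable. The connectors must instead be whole braid segments, and guaranteeing enough vertex-disjoint braid-gadgets at each vertex requires a careful second-moment (or switching) argument tuned precisely to the density $f(\ell)$. The remaining steps follow the now-standard sprinkle--KSS--absorption template, but a careful choice of the sprinkling parameters is required to retain the exponent $1/f(\ell)$ matching the lower bound of Theorem~\ref{thm:lower}.
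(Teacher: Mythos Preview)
Your overall framework---running the absorption machine of \cite{DRRS,ADRRS} and replacing the clique-based density input by Propositions~\ref{<d} and~\ref{5.8}---is exactly what the paper does, and your identification of the key new ingredient (the braid $B_t$ is strictly balanced when $\ell<r(r+1)$, so $m_{B_t}=d_{B_t}\to f(\ell)$) is correct.

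There is, however, a genuine gap in step~(iii). The Koml\'os--S\'ark\"ozy--Szemer\'edi theorem applied to a graph with minimum degree $(\tfrac12+\tfrac{\eps}{2})n$ yields only the \emph{first} power of a Hamiltonian cycle, not the $m$-th power; to obtain an $m$-path deterministically one would need $\delta\ge \tfrac{m}{m+1}n$, which is far above what is assumed. The Covering Lemma in \cite{ADRRS} (Lemma~4.4 there, proved via Claim~8.2) does \emph{not} appeal to KSS: it builds an almost-spanning $m$-path by repeatedly intertwining long braids $B_{2M}$ drawn from $G(n,p)$ with blown-up paths $P_{4M}(\ell)$ found in $G$, where $M\ge 1/(4\ell\gamma^3)$ and $\gamma$ is a small constant depending on~$\eps$. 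This step is not a side issue---it is precisely the one that dictates the exponent. Among the four applications of Proposition~\ref{5.8} (Connecting and Reservoir with $t=4$, Absorbing with $t=2$, Covering with $t=2M$), the Covering Lemma uses by far the largest~$t$, and since $d_t$ is strictly increasing in $t$ with limit $f(\ell)$, the required bound on $p$ is $p\ge n^{-1/d_{2M}+\eps}$; setting $\mu=1/d_{2M}-1/f(\ell)+\eps$ gives $\mu\to 0$ as $\eps\to 0$ via $M\to\infty$.

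A secondary point: your final paragraph locates the ``main obstacle'' in the absorber construction and suggests a bespoke second-moment argument is needed. In fact the paper changes nothing in the deterministic parts of the four lemmas; the gadgets, connectors, and absorbers are \emph{identical} to those in \cite{ADRRS}. The only modification is swapping \cite[Proposition~5.8]{ADRRS} for Proposition~\ref{5.8} (a Janson-type bound, not second moment) at each of the four places it is invoked. Also, your claim that ``$n^{-2/\ell}$ lies below $n^{-1/f(\ell)}$ and this shortcut is unavailable'' has the inequality right but the conclusion backwards: since $f(\ell)>\ell/2$ here, at $p\ge n^{-1/f(\ell)-\mu}$ the $\ell$-cliques are \emph{more} abundant than at the old threshold, not less; the point is rather that cliques alone no longer control $\Phi_{B_t}$, the whole braid does.
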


\noindent Theorem \ref{thm:upper} implies that, if exists, $\bar d_{m}(n)\le n^{-1/f(\ell)}$.

Remember that in \cite{DRRS,ADRRS} the $m$-Dirac thresholds for $m\in\{2,3,4,8\}$, as well as the $5$-Dirac over-threshold, have been already established. The next result states that for all other $m$, except 6 and 9, the parameter $\ell_m$ from Theorem \ref{thm:lower} satisfies the restriction on $\ell$ from Theorem \ref{thm:upper}.

\begin{prop}\label{prop:lr_ineq}
	For $m=7$ and any integer $m\geq 10$ we have
	\[
	\ell_m<(m-\ell_m)(m-\ell_m+1).
	\]
\end{prop}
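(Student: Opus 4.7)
The plan is to reduce the inequality to a tractable algebraic form, dispatch large $m$ by a single polynomial bound, and verify the remaining finite list of values by direct computation.

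Setting $r := m-\ell_m$, the desired inequality $\ell_m<(m-\ell_m)(m-\ell_m+1)=r(r+1)$ rearranges immediately to $(r+1)^2>m+1$. Since by definition $\ell_m\in\{\lfloor\lambda_m\rfloor,\lceil\lambda_m\rceil\}$, we have the strict bound $\ell_m<\lambda_m+1$, whence $r+1>m-\lambda_m$. It therefore suffices to prove $(m-\lambda_m)^2\ge m+1$; isolating $\sqrt{m+1}$ and $\lambda_m=\sqrt{m(m+1)/2}$ in turn and squaring (both sides remain positive for $m\ge2$) reduces this to the polynomial inequality
\[
m^4-14m^3-11m^2+4m+4\;\ge\;0,
\]
which I expect to hold for every $m\ge 15$ since the quartic term dominates the remaining polynomial.

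For the remaining values $m\in\{7,10,11,12,13,14\}$, I will compute $\ell_m$ directly from \eqref{ell_m} by comparing $f(\lfloor\lambda_m\rfloor)$ with $f(\lceil\lambda_m\rceil)$, obtaining $\ell_7=5,\ \ell_{10}=7,\ \ell_{11}=8,\ \ell_{12}=9,\ \ell_{13}=10,\ \ell_{14}=10$, and check each instance of $\ell_m<r(r+1)$ by hand; the corresponding values of $r(r+1)$ are $6,12,12,12,12,20$, each strictly exceeding the matching $\ell_m$.

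The main obstacle is really just the slight looseness of the crude bound $\ell_m<\lambda_m+1$: it provides an analytic argument only from $m\ge 15$ onward, forcing a short finite check for the six intermediate values. If greater economy is desired, one could split the analysis according to whether $\ell_m=\lfloor\lambda_m\rfloor$ (in which case the tighter bound $\ell_m\le\lambda_m$ handles $m\ge9$) or $\ell_m=\lceil\lambda_m\rceil$ (where the defining inequality $\ell_m(\ell_m-1)<\lambda_m^2$ handles $m\ge12$), so that only $m=7$ would need to be verified by hand; however, the one-step approach combined with the six small checks seems simplest to write down, and it transparently explains why the excluded values $m\in\{8,9\}$ are precisely where the inequality fails.
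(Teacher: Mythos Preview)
Your proposal is correct and follows essentially the same approach as the paper: a direct finite check for $m\in\{7,10,11,12,13,14\}$ (with the same computed values of $\ell_m$) combined with an asymptotic argument for $m\ge 15$ based on the bound $\ell_m<\lambda_m+1$. Your reformulation of $\ell_m<r(r+1)$ as $(r+1)^2>m+1$ and the reduction to the single quartic inequality is a modest streamlining of the paper's argument, which treats the floor and ceiling cases separately; in fact the paper's ceiling-case inequality $\lambda_m+1\le(m-\lambda_m-1)(m-\lambda_m)$ rearranges precisely to your condition $(m-\lambda_m)^2\ge m+1$, so the two arguments coincide where it matters.
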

\noindent We defer the simple proof of Proposition \ref{prop:lr_ineq} to Appendix~\ref{appendix:a}.

Theorems~\ref{thm:lower} and~\ref{thm:upper} together with Proposition~\ref{prop:lr_ineq} determine the $m$-Dirac over-threshold in almost all cases.

\begin{cor}\label{cor:threshold}
	For $m=7$ and every integer $m\geq 10$,
	$$\bar d_{m}(n) = n^{-1/f(\ell_m)}.$$
\end{cor}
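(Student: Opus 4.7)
My plan is to obtain Corollary \ref{cor:threshold} as a direct synthesis of Theorems \ref{thm:lower} and \ref{thm:upper}, using Proposition \ref{prop:lr_ineq} as the bridge that makes Theorem \ref{thm:upper} applicable with the optimal choice of $\ell$. I would take $d(n):=n^{-1/f(\ell_m)}$ as the candidate over-threshold and verify the two clauses of the definition. Clause (ii), the lower-bound direction, is literally the conclusion of Theorem \ref{thm:lower}: for every $\mu>0$ it supplies an $\eps>0$ and a $1$-Dirac sequence $G_\eps$ for which $p\le n^{-1/f(\ell_m)-\mu}=d(n)n^{-\mu}$ forces $\Prob(G_\eps\cup G(n,p)\in\cC_n^m)\to 0$.

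For clause (i), the upper-bound direction, the natural move is to invoke Theorem \ref{thm:upper} with the specific choice $\ell:=\ell_m$; this aligns the exponent $-1/f(\ell)$ appearing there with the exponent of $d(n)$. The price is to check that $\ell_m$ satisfies the three hypotheses of Theorem \ref{thm:upper}, namely
\[
m/2\le\ell_m\le m-1 \qquad\text{and}\qquad \ell_m<(m-\ell_m)(m-\ell_m+1).
\]
The bound $\ell_m\ge m/2$ is already recorded below the definition of $\ell_m$ (for $m\ge 4$). The inequality $\ell_m\le m-1$ reduces via $\ell_m\le\lceil\lambda_m\rceil$ to $\lambda_m\le m-1$, i.e.\ $m^2-5m+2\ge 0$, which holds for every integer $m\ge 5$ and hence throughout the stated range. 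The last inequality $\ell_m<(m-\ell_m)(m-\ell_m+1)$ is precisely Proposition \ref{prop:lr_ineq}, and its failure is exactly what forces the cases $m\in\{6,9\}$ to be excluded from the corollary.

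Putting it together, Theorem \ref{thm:lower} delivers clause (ii) and Theorem \ref{thm:upper} applied with $\ell=\ell_m$ delivers clause (i) of the over-threshold definition for $d(n)=n^{-1/f(\ell_m)}$, which finishes the proof. I do not anticipate any genuine obstacle at this stage: the deduction is a bookkeeping match between the conclusions of the two theorems and the two clauses of the over-threshold definition, together with an elementary inequality on $\lambda_m$. All of the real difficulty sits inside Theorems \ref{thm:lower} and \ref{thm:upper} themselves, while the restriction on the allowed values of $m$ is carried entirely by Proposition \ref{prop:lr_ineq}.
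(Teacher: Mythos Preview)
Your proposal is correct and follows exactly the route the paper intends: the corollary is stated immediately after the sentence ``Theorems~\ref{thm:lower} and~\ref{thm:upper} together with Proposition~\ref{prop:lr_ineq} determine the $m$-Dirac over-threshold in almost all cases,'' and your argument simply spells out this deduction, including the elementary checks that $\ell_m$ lies in the range $[m/2,m-1]$ required by Theorem~\ref{thm:upper}. There is nothing to add.
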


 The  remaining cases ($m=6$ and $m=9$) are treated separately in the next theorem. It is worth mentioning that here the over-threshold differs from $ n^{-1/f(\ell_m)}$.

\begin{theorem}\label{thm:6&9}
We have
	$$\bar d_{6}(n) = n^{-4/9} \ \ \text{ and } \ \ \bar d_{9}(n) = n^{-2/7}.$$
\end{theorem}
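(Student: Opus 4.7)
The plan is to handle the upper and lower bounds separately.

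For the upper bound one invokes Theorem~\ref{thm:upper} with the best permissible $\ell$, which here is not the density-minimizer $\ell_m$ but the largest $\ell$ satisfying the constraint $\ell<(m-\ell)(m-\ell+1)$. For $m=6$ we have $\ell_6=5$ violating this constraint (since $5\not<2$), so we take $\ell=4$: then $m/2=3\le 4\le 5=m-1$ and $4<2\cdot 3=6$, while $f_6(4)=\frac{1}{4}\bigl(\binom{4}{2}+\binom{3}{2}\bigr)=9/4$, yielding $\bar d_6(n)\le n^{-4/9}$. For $m=9$, similarly $\ell_9=7$ fails ($7\not<6$), so we take $\ell=6$: $9/2\le 6\le 8$, $6<3\cdot 4=12$, and $f_9(6)=\frac{1}{6}\bigl(\binom{6}{2}+\binom{4}{2}\bigr)=7/2$, yielding $\bar d_9(n)\le n^{-2/7}$.

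For the lower bound, Theorem~\ref{thm:lower} instantiated at $\ell_m$ only delivers exponents $5/11$ and $7/24$, strictly smaller than the target $4/9$ and $2/7$. The strategy is to revisit the construction behind Theorem~\ref{thm:lower} but with the denser braid whose clique size equals the admissible $\ell^{\ast}:=4$ (for $m=6$) or $\ell^{\ast}:=6$ (for $m=9$), matching the $\ell$ used in the upper bound. The asymptotic density of long $\ell^{\ast}$-braids is exactly $f_m(\ell^{\ast})\in\{9/4,\,7/2\}$, so a first-moment calculation for $G(n,p)$ with $p\le n^{-1/f_m(\ell^{\ast})-\mu}$, analogous to the one in Theorem~\ref{thm:lower}, shows that sufficiently long copies of such a braid are absent w.h.p.

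The bulk of the work, and the main obstacle, is then to specify the Dirac graph $G_\eps$ so that any Hamiltonian $m$-th power in $G_\eps\cup G(n,p)$ is compelled to contain a long $\ell^{\ast}$-braid inside the random part. Concretely one adapts the graph used for Theorem~\ref{thm:lower} --- most naturally by a bipartition-style partition of the vertex set together with a carefully chosen set of removed edges --- so that $\delta(G_\eps)\ge(\tfrac{1}{2}+\eps)n$ is preserved while the alternative embedding route through the sparser $\ell_m$-braid is structurally blocked \emph{inside $G_\eps$}, even though (for small $\mu$) $G(n,p)$ itself may still contain many $\ell_m$-braids. The hard part is precisely this: demonstrating that, because $\ell_m\not<(m-\ell_m)(m-\ell_m+1)$ puts $\ell_m$ outside the admissible regime of Theorem~\ref{thm:upper}, the $\ell_m$-braid route cannot be completed consistently into a Hamiltonian $m$-th power in $G_\eps$, and the cycle is genuinely forced into the $\ell^{\ast}$-regime. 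Once this structural matching is set up, the first-moment argument closes the lower bound and gives $\bar d_6(n)=n^{-4/9}$ and $\bar d_9(n)=n^{-2/7}$.
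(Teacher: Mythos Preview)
Your upper bound is correct and matches the paper exactly: apply Theorem~\ref{thm:upper} with $\ell=4$ for $m=6$ and $\ell=6$ for $m=9$.

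Your lower bound plan, however, has a genuine gap. You propose to redesign $G_\eps$ so that a Hamiltonian $m$-cycle is forced to contain a long $\ell^{\ast}$-braid in the random part, and you explicitly acknowledge that ``the hard part is precisely this'' without actually carrying it out. This is not just an omitted detail: there is no evident mechanism by which removing edges from $G_\eps$ (while keeping $\delta\ge(\tfrac12+\eps)n$) could force the random part of an $m$-path into a specific braid pattern $B(\ell^{\ast},m-\ell^{\ast},t)$. The $m$-path need not decompose into braids at all, and ``blocking the $\ell_m$-braid route inside $G_\eps$'' is not a well-defined operation --- the lower bound problem is about the minimum number of edges of $P[A]\cup P[B]$ over all bipartitions, not about which braid the path resembles.

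The paper's argument is entirely different and stays with the \emph{same} $G_\eps$ of Definition~\ref{dfn:G_eps}. The key new idea is to delete, in addition to $U\cup W$, one vertex from every copy of $K_5$ (for $m=6$) or $K_7$ (for $m=9$) in $G(n,p)$; at the relevant $p$ there are $o(\eps n)$ such copies, so a long $m$-path $P$ still survives, but now $F:=P[A]\cup P[B]$ is $K_5$-free (resp.\ $K_7$-free). This freeness forces $P[A]$ and $P[B]$ to contain spanning $2$-paths (resp.\ $3$-paths), and a further pigeonhole count on short segments extracts enough additional ``$3$-far'' (resp.\ ``$4$- and $5$-far'') edges to reach the sharper densities $9/4$ and $7/2$ in place of the $f(\ell_m)$ values $11/5$ and $24/7$ that Lemma~\ref{lem:pathedges} alone would give. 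A first-moment bound then finishes. So the improvement comes from a refined edge count after clique destruction, not from a new host graph or a braid-finding argument.
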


Thereby, we have obtained the complete collection of $m$-Dirac thresholds and over-thresholds.
They are summarized in Table~\ref{table:exponents}.
\setlength{\tabcolsep}{2pt}
\begin{table}
\begin{tabular}{ | >{\columncolor{codelightgray}} c | C{1.1cm} | C{1.1cm} | C{1.1cm} | C{1.1cm} | C{1.1cm} | C{1.1cm} | C{1.1cm} | C{1.1cm} | C{1.1cm} | C{1.1cm} |}
    \hline
    \rowcolor{codelightgray}
    $m$ & 2\,\cite{DRRS} & 3\,\cite{ADRRS, NT} & 4\,\cite{ADRRS} & 5\,\cite{ADRRS} & 6 & 7 & 8\,\cite{ADRRS} & 9 & $\ge10$\\ \hline
    $\alpha_m$ & 1 & 1 & $\frac{3}{2}$ & $2$ & $\frac{9}{4}$ & $\frac{13}{5}$ & $3$ &
    $\frac{7}{2}$ & $f(\ell_m)$\\ \hline
  \end{tabular}
  \caption{The negated reciprocals $\alpha_m$ of the exponents of $n$ in the $m$-Dirac thresholds ($m\in\{2,3,4,8\}$) and over-thresholds (all other $m\ge2$).}
\label{table:exponents}
\end{table}

\section{Lower bound -- The proof of Theorem \ref{thm:lower}}

The proof of Theorem \ref{thm:lower} relies on two lemmas. The first one is a simple observation about random graphs.

\begin{lemma}\label{lem:no_K_{m+1}}
For each integer $m\geq 2$, let $\l_m$ be as defined in (\ref{ell_m}). For every $\mu>0$, if
\[p \le  n^{-1/f(\ell_m)-\mu},\]
then a.a.s.~$G(n,p)$ contains no copy of $K_{m+1}$.
\end{lemma}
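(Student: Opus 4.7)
The lemma is a straightforward first-moment exercise for the threshold of $K_{m+1}$ in $G(n,p)$. Letting $X$ denote the number of copies of $K_{m+1}$ in $G(n,p)$, we have
\[
\EE[X] \le n^{m+1}\, p^{\binom{m+1}{2}}.
\]
Writing $p \le n^{-\beta}$ with $\beta = 1/f(\ell_m)+\mu$, the exponent of $n$ in this upper bound becomes
\[
(m+1) - \beta\cdot\tfrac{m(m+1)}{2} \;=\; (m+1)\bigl(1-\tfrac{\beta m}{2}\bigr),
\]
which is negative as soon as $\beta > 2/m$. By Markov's inequality, to conclude that a.a.s.\ $X=0$ it therefore suffices to establish the purely arithmetic inequality
\[
f(\ell_m) \le m/2,
\]
since any fixed $\mu>0$ then makes the resulting strict inequality $\beta>2/m$ automatic.

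For the inequality above, I would avoid the delicate optimum $f(\lambda_m)=\sqrt{2m(m+1)}-m-1$ and instead use the easy boundary value
\[
f(m) \;=\; m + \frac{m+1}{2} - m - 1 \;=\; \frac{m-1}{2}.
\]
Since $\lambda_m=\sqrt{m(m+1)/2}\le m$ for every $m\ge 1$, the integer $m$ lies in the region where the convex function $f$ is non-decreasing; and because $\ell_m\in\{\lfloor\lambda_m\rfloor,\lceil\lambda_m\rceil\}$ is by construction the integer minimizer of $f$ on the positive integers, we obtain
\[
f(\ell_m) \;\le\; f(m) \;=\; \frac{m-1}{2} \;<\; \frac{m}{2},
\]
as required. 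This, combined with $\mu>0$, gives $\beta>2/m$, hence $\EE[X]=o(1)$, and Markov finishes the proof.

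No serious obstacle is anticipated; the only noteworthy point is the pleasant surprise that the crude estimate $f(\ell_m)\le f(m)$ suffices, so neither the convexity identity $f(\ell)-f(\lambda_m)=(\ell-\lambda_m)^2/\ell$ nor any case analysis in $m$ is actually needed.
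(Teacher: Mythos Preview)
Your proof is correct, and in fact the arithmetic step is cleaner than the paper's. Both arguments reduce to the same first-moment computation and the same key inequality $f(\ell_m)\le m/2$. The paper establishes this inequality by a small case analysis: it checks $m\in\{2,3\}$ by hand, and for $m\ge 4$ observes that $f(\ell_m)\le m/2$ is equivalent (via the quadratic) to $\ell_m\ge m/2$, then uses $\ell_m\ge \lambda_m-1\ge m/2$.

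You bypass all of that with the single evaluation $f(m)=(m-1)/2$. Since the paper already records that $f(\ell_m)=\min_{\ell\in[m]}f(\ell)$ (the line preceding the definition of $\ell_m$), and $m\in[m]$, the bound $f(\ell_m)\le f(m)<m/2$ is immediate and uniform in $m\ge 2$. Your remark about $\lambda_m\le m$ and monotonicity is correct but not even needed once one appeals to the minimality of $\ell_m$. So your approach buys a shorter, case-free verification of the same inequality; the paper's approach, in exchange, yields the slightly stronger structural fact $\ell_m\ge m/2$, which is used elsewhere.
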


\begin{proof}
 Clearly,  $\PP(G(n,p)\supset K_{m+1})=O\left(n^{m+1}p^{\binom {m+1}2}\right)=o(1)$, whenever $p=o(n^{-2/m})$. Thus, all we need to show is the inequality $f(\ell_m)\le m/2$.
 For $m\in\{2,3\}$ this can be checked by computing directly $f(\lfloor\lambda_m\rfloor)$ and $ f(\lceil\lambda_m\rceil)$. Indeed, we have $\lambda_2=\sqrt3$ and $\lambda_3=\sqrt6$, so $\ell_2=\ell_3=2$ and $f(\ell_2)=1/2<1$, while $f(\ell_3)=1<3/2$.
Assume now that $m\ge4$. It can be easily verified (by solving a quadratic inequality) that $f(\ell_m)\le m/2$  is equivalent to $\l_m\ge m/2$. Finally, recall that  $\ell_m\ge\frac{\sqrt{2m^2+2m}}{2}-1\ge m/2$ for $m\ge 4$.
\end{proof}

The proof of the second lemma, which is fully deterministic, will be given in Section~\ref{sec:3}. Recall that by an $m$-path we mean the $m$-th power of a path.

\begin{lemma}\label{lem:pathedges}
 Let $m\geq 2$ and let $P$ be an $m$-path with $V(P)=A\cup B$, $A\cap B=\emptyset$, such that there are no $m+1$ consecutive vertices in $P$ either all belonging  to $A$ or all belonging to $B$. Then, 
\[|E(P[A])| + |E(P[B])| \geq f(\ell_m)|V(P)| -2m^2,\]
where $\l_m$  is defined in (\ref{ell_m}).
\end{lemma}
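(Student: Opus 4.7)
The plan is to decompose the coloring by its maximal monochromatic runs and prove the bound via a local exchange argument. Denote the runs of the 2-coloring of $P$ by $R_1,R_2,\ldots,R_k$, alternating in color with lengths $\rho_1,\ldots,\rho_k\in[1,m]$ (by the hypothesis that no $m{+}1$ consecutive vertices have the same color). Then
\[
|E(P[A])| + |E(P[B])| = \sum_{j=1}^{k}\binom{\rho_j}{2} \;+\; \sum_{j} b(R_j, R_{j+2}) \;+\; (\text{higher-order bridge terms}),
\]
where $b(R_j,R_{j+2})$ counts $P$-edges between the two same-color runs separated by the opposite-color run $R_{j+1}$ of length $b=\rho_{j+1}$. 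A direct calculation with $r:=m-b$ shows that when $\min(\rho_j,\rho_{j+2})\ge r$ the bridge has exactly $\binom{r+1}{2}$ edges, and the higher-order bridges (to $R_{j+4}, R_{j+6}, \ldots$) vanish unless several consecutive runs are short.

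The extremal configuration is the periodic coloring $A^{\ell_m} B^{\ell_m} A^{\ell_m} B^{\ell_m}\cdots$. Here a per-period count over each block of $2\ell_m$ vertices yields exactly
\[
2\binom{\ell_m}{2} + 2\binom{m-\ell_m+1}{2} \;=\; 2\ell_m f(\ell_m)
\]
monochromatic edges, so cyclically this coloring attains the target $f(\ell_m)|V(P)|$ on the nose. The heart of the proof is showing that any valid coloring has at least this many monochromatic edges, which I would handle by a local smoothing argument: given any coloring that contains a run of length $\rho_j\ne\ell_m$, modify the coloring by shifting a single vertex across the appropriate run-boundary (or, near a short run, by swapping the label of a small block) and verify that the change in the number of monochromatic edges is nonnegative. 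Convexity of
\[
f(\ell)=\ell+\frac{m(m+1)}{2\ell}-m-1
\]
at its integer minimizer $\ell_m$ is what makes each such swap nonincreasing for $-|E(P[A])|-|E(P[B])|$. Iterating drives any configuration to the extremal periodic pattern while only decreasing the monochromatic count, proving the inequality in the cyclic setting. The first and last $m$ vertices of $P$, where forward windows are truncated, contribute at most $2m^2$ missing edges and account for the $-2m^2$ slack.

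The main obstacle is controlling the bridges when adjacent runs are atypically short, so that the generic identity $b(R_j,R_{j+2})=\binom{r+1}{2}$ fails. In this regime a per-run amortization is insufficient; instead one must perform a pairwise (or multi-run) accounting in which the short runs contribute large intra-run terms $\binom{\rho_j}{2}$ that compensate for the smaller bridges. Isolating the precise local exchange that handles this short-run case -- and checking that it never increases $|E(P[A])|+|E(P[B])|$ beyond the convexity-driven generic case -- is, I expect, the most delicate step of the argument.
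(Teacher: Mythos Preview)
Your proposal identifies the correct extremal picture---the periodic pattern with block length $\ell_m$---and the per-block count $2\ell_m f(\ell_m)$ is right. But the proof is not complete: the ``local smoothing'' that is supposed to drive an arbitrary coloring toward the periodic one while only decreasing the monochromatic edge count is never specified, and you yourself flag the short-run regime (where bridges reach past $R_{j+2}$) as unresolved. That regime is not a side case; it is where a naive single-vertex shift can \emph{increase} the count rather than decrease it, because moving a vertex across a boundary changes several bridge terms simultaneously and the convexity of $f$ alone does not control those cross-terms. Without a concrete exchange move and a verified inequality for it, the argument is a heuristic.

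The paper takes a different route that sidesteps exactly this difficulty. Rather than smoothing to the periodic pattern, it transforms $P$ into a path $P'$ whose run-lengths $x_1,\dots,x_q$ satisfy two structural conditions: (a) $x_i\le m$ and (b) $x_i+x_{i+1}\ge m$. Condition (b) is the key: it forces every monochromatic edge of $P'$ to join two \emph{consecutive} same-color runs, so the bridge count is exactly $\binom{m-x_i+1}{2}$ with no higher-order terms, and one gets the clean identity
\[
|E(P'[A'])|+|E(P'[B'])| \;=\; \sum_i\Bigl(\tbinom{x_i}{2}+\tbinom{m-x_i+1}{2}\Bigr)+O(m^2)
\;=\;\sum_i x_i f(x_i)+O(m^2)\;\ge\; f(\ell_m)|V(P)|+O(m^2).
\]
The last step uses only the pointwise bound $f(x_i)\ge f(\ell_m)$---no convexity or periodicity needed. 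The transformation $P\mapsto P'$ is done by an explicit left-to-right sweep (truncating long runs, shifting the first vertex of $S'_{i+1}$ to the end of $S'_{i-1}$ when $|S'_{i-1}|+|S'_i|<m$), and each move is checked to change the edge count by a controlled amount, with total increase at most $(m-1)^2/2$. This is what replaces your smoothing, and it is precisely designed so that the awkward short-run case becomes Case~2 of the sweep, where the shift is shown directly not to increase the edge count.
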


\noindent
Lemma~\ref{lem:pathedges} is the key novelty compared to \cite{ADRRS} and might be of independent interest, as it provides structural information about a graph commonly arising in the study of randomly augmented graphs.

Let us now define this crucial graph. It has been first used in \cite{BFM2003}   to prove a lower bound on $d_0$, and then in many other papers in the same context, e.g., \cite{DRRS,ADRRS,NT,BPSS2022}.

\begin{dfn}\label{dfn:G_eps}
For $\eps>0$ and even $n$, let $G$ be an $n$-vertex complete bipartite graph with bipartition classes $X\cup Y$, where $|X|=|Y|=n/2$. Fix two subsets $U\subset X$ and $W\subset Y$ with $|U|=|W|=\lfloor\eps n\rfloor$. The graph $G_{\eps}$ is obtained from $G$ by adding two complete bipartite graphs with bipartitions $(U,X\setminus U)$ and $(W,Y\setminus W)$.
\end{dfn}

Now we are ready to give a short proof of Theorem \ref{thm:lower}.

\begin{proof}[Proof of Theorem~\ref{thm:lower}]
Given $m$ and $\mu$, let $\eps>0$ be any constant such that $\eps\le1/6$ and
\begin{equation}\label{epsi}
\frac1\eps >\frac{6m^2}{(f(\ell_m))^2\mu}+\frac{6m^2}{f(\ell_m)}.
\end{equation}
(Note that $\eps$ decreases when $\mu$ does.)

Let $G_{\eps}$ be as in Definition~\ref{dfn:G_eps}. Let $p\le n^{-1/f(\ell_m)-\mu}$ and suppose that $G_{\eps}\cup G(n,p)$ contains the $m$-th power of a Hamiltonian cycle $C$. After removing the vertices of $U\cup W$ from $G_\eps$ (and thus from $C$), we obtain a collection of at most $2\lfloor\eps n\rfloor$ $m$-paths in $(X\setminus U)\cup(Y\setminus W)$, hence at least one of them must be of length at least
\[\frac{n-2\lfloor\eps n\rfloor}{2\lfloor\eps n\rfloor} \ge \frac1{2\eps} - 1\ge \frac1{3\eps}.\]
Fix such a path $P$ of length $L=\lceil\tfrac1{3\eps}\rceil$ and consider its edgewise intersection with $G(n,p)$. Setting $A=V(P)\cap X$ and $B=V(P)\cap Y$, by the construction of $G_{\eps}$ and the removal of $U\cup W$,
\[ G(n,p)\cap P= P[A]\cup P[B]. \]


Let $M=\lceil f(\ell_m)L\rceil-2m^2$. By Lemma~\ref{lem:pathedges}, either $G(n,p)\supset K_{m+1}$ or there are no $m+1$ consecutive vertices in $P$ either all belonging to $A$ or all belonging to $B$, and thus
\[|E(P[A])|+|E(P[B])| \geq M.\]
The former event,  by Lemma~\ref{lem:no_K_{m+1}}, does not hold a.a.s..
The latter, in turn, implies the existence in $G(n,p)$ of a subgraph with $L$ vertices and $M$ edges. The expected number of such subgraphs, abbreviating $f=f(\ell_m)$ and using our bound on $p$ and \eqref{epsi}, can be crudely bounded from above by
$$\binom{\binom L2}Mn^Lp^{M}\le O\left(n^{L-(1/f+\mu)M}\right)= O\left(n^{L-(1/f+\mu)(fL-2m^2)}\right)= O\left(n^{2m^2/f+2m^2\mu-\mu f/(3\eps)}\right)=o(1).$$

Hence, a.a.s.\, $G_\eps\cup G(n,p)$ cannot contain such a path $P$, and thus a.a.s.
\[ G_{\eps}\cup G(n,p) \notin \mathcal{C}_n^m.\]
\end{proof}

\section{Proof of Lemma~\ref{lem:pathedges}\label{sec:3}}

This section is entirely devoted to the proof of our main lemma, namely Lemma~\ref{lem:pathedges}.

\begin{proof}[Proof of Lemma~\ref{lem:pathedges}]
The ordering of the vertices on $P$ defines a partition of $A\cup B$ into segments $S_1, S_2, \ldots, S_t$, which consist of maximal sets of consecutive vertices of $P$ lying entirely in $A$ or entirely in $B$. Thus, $A=S_1\cup S_3\cup\dots$ and $B=S_2\cup S_4\cup\dots$. We will first show that in order to bound the number of edges in $P[A]\cup P[B]$ we can consider a~different $m$-path $P'$ on the same set of vertices, but with a new partition $A'\cup B' = A\cup B$ and new segments $S'_1, S'_2, \ldots, S'_{t'}$ which alternate between $A'$ and $B'$, and such that
\begin{enumerate}
\item[(a)] for all $i=1,\dots, t'$, $|S'_i|\le m$, and
\item[(b)] for all $i=1,\dots, t'-1$,  $|S'_i|+|S'_{i+1}|\ge m$.
\end{enumerate}
Condition (a) implies that neither $P'[A']$, nor $P'[B']$, contains a copy of $K_{m+1}$, while condition (b) guarantees that all edges in $E(P'[A'])\cup E(P'[B'])$ connect only vertices lying in consecutive segments of $A'$ or consecutive segments of $B'$. The second condition simplifies the computation of the number of relevant edges as, given (b), the number of edges between segment $S_i'$ and all segments lying to the right will depend only on the size of $S_{i+1}'$. In the end, we will bound the size of $E(P[A])\cup E(P[B])$ by the size of $E(P'[A'])\cup E(P'[B'])$ minus a~correcting additive term, cf.\,\eqref{eq:1}. Then the latter, owing to properties (a) and (b), will be minimized by $f(\ell_m)|V(P')|+O(1)$, a quantity corresponding to the case when $P'[A']$ and $P'[B']$ have both a braid-like structure.

The path $P'$ and the partition $A'\cup B'$ are constructed from the path $P$ and the partition $A\cup B$ iteratively. In each step we can shift a number of vertices between the partition classes, thus changing the partition. Moreover, once we shift a vertex from one class to another, say we shift a vertex $v$ from $A'$ to $B'$, we delete all edges going from $v$ to vertices in $A'$, and we join $v$ with those vertices in $B'$ which lie at distance at most $m$ from $v$ in $P'$. Another valid operation is shifting the first vertex $u$ of a segment $S_{i+1}'$ at the end of segment $S_{i-1}'$. Here we don't change the partition, but we change the ordering of vertices in $P'$ and we need to adjust the edges so that $u$ is connected only with vertices in the same partition class and lying at distance at most $m$ from it in the new ordering.

{\bf Initiation:}
Initially $P'=P$, $A'=A$, $B'=B$ and $S_i'=S_i$ for $i=1,2,\ldots,t$. If $|S'_1|>m$, we leave the first $m$ vertices in $S'_1$ and shift the remaining ones to $S'_2$. Note that this operation changes the partition, as the shifted vertices move from $A'$ to $B'$, but it cannot increase the number of edges. Indeed, if $T$ is the segment we shift from $S_1'$ to $S_2'$, then for $k\leq \lceil|T|/2\rceil$ the number of edges incident with the $k$-th vertex from the left in $T$, which we have to remove, is at least as large as the number of edges incident with the $k$-th vertex from the right, which have to add.

Now, let $j$ be the largest index for which $|S'_1|+|S'_2|+\ldots+|S'_j|<m$. If $j=0$ or $j=1$ we do nothing. If $j\geq 2$ and, say, $S_j'\subset A'$, we merge all vertices from $S'_1\cup S'_2\cup\ldots\cup S'_j$ into one class and place it in $A'$. That is we redefine $S_1':=S'_1\cup S'_2\cup\ldots\cup S'_j$ and $S_i':=S'_{i+j-1}$, $i\ge 2$.
Note that this will increase the number of edges in $P'[A']\cup P'[B']$ by at most $(m-1)^2/4$. Indeed, if $x$ and $y$ are the numbers of vertices of $S'_1\cup S'_2\cup\ldots\cup S'_j$ in, respectively, $A'$ and $B'$, then, since $x+y<m$, after the change the number of edges increases by $xy \leq x(m-1-x)\le(m-1)^2/4$.
Note that after this initial step, regardless of what the value of $j$ was, we have $|S'_1|\leq m$. 

{\bf Iteration:} We then scan  $P'$ from left to right and check whether the segments $S'_i$, $i=1,\dots,t'$, satisfy conditions (a) and (b). If not, then we move some of the vertices, changing the ordering of $P'$ or changing the partition $A'\cup B'$.
Suppose that the segments $S'_1,S'_2,\ldots,S'_{i-1}$ of $P'$ satisfy (a) and (b). In view of above, we may assume that $i\geq 2$. If $S'_{i-1}$ was the last segment, then we are done and, if not, we proceed by considering four cases. In each case we make suitable modifications of $P'$ and the partition $V(P')=A'\cup B'$ and  bound the change of the number of edges in $P'[A']\cup P'[B']$. It turns out that it never increases except in case 3 where it can increase by at most $(m-1)^2/4$.

{\bf Case 0:} If $|S'_i|\leq m$ and $|S'_{i-1}|+|S'_i|\geq m$ then segments $S'_1, S'_2, \ldots, S'_i$ satisfy (a) and (b) and we can move to the next segment $S'_{i+1}$.

{\bf Case 1:} If $|S'_i|>m$, we leave the first $m$ vertices unchanged and move the remaining ones into $S'_{i+1}$, thus changing the partition (as the moved vertices change the side, see~Fig.~\ref{Fig.1}). If before this operation $S'_i$ was the last segment, then we simply create a new segment $S'_{i+1}$. Notice that after this step segments $S'_1,S'_2,\ldots,S'_i$ satisfy (a) and (b), since now $|S'_i|=m$, and we can move to the next segment $S'_{i+1}$.

Let $S$ be the sequence of the first $m$ vertices of $S'_i$ on $P'$ before the change and let $T$ be the sequence of the remaining vertices of $S'_i$. Notice that in this step we can only add or delete edges with exactly one endpoint in $T$. We delete edges between $T$ and vertices on the left, and we add edges between $T$ and some vertices on the right. Using the same argument as in the initiation we see that after this step the number of edges can either remain the same or drop.


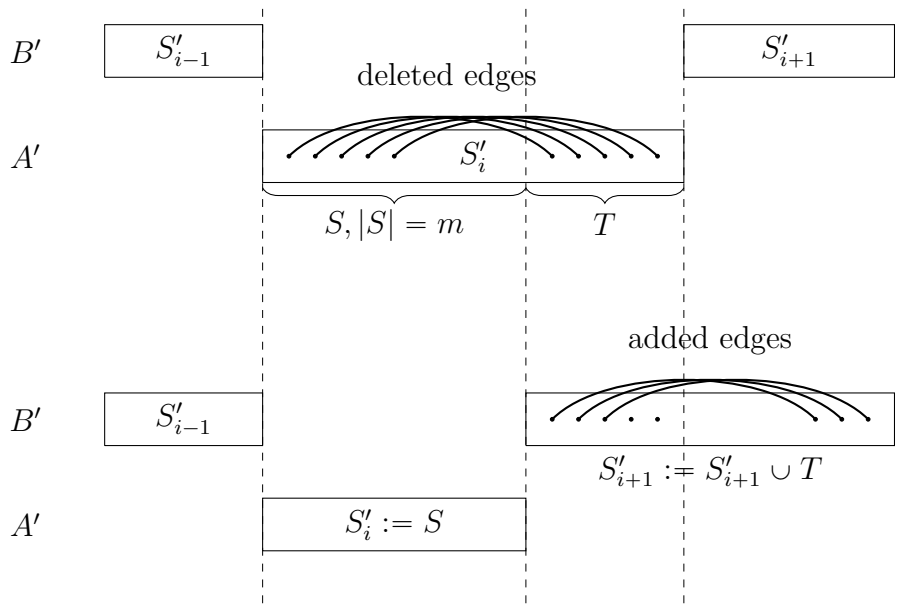
\begin{figure}

\begin{center}
\begin{tikzpicture}[scale=0.7]

\draw[dashed, ultra thin] (3,4.3) -- (3,-7);
\draw[dashed, ultra thin] (8,4.3) -- (8,-7);
\draw[dashed, ultra thin] (11,4.3) -- (11,-7);

\draw (0,3) -- (3,3) -- (3,4) -- (0,4) -- (0,3);
\node at (1.5,3.5) {$S'_{i-1}$};
\draw (3,1) -- (11,1) -- (11,2) -- (3,2) -- (3,1);
\node at (7,1.5) {$S'_i$};
\draw (11,3) -- (15,3) -- (15,4) -- (11,4) -- (11,3);
\node at (13,3.5) {$S'_{i+1}$};
\draw [decoration={brace,amplitude=0.5em},decorate]
        (8,0.9) -- (3,0.9);
\node at (5.5,0.2) {$S,|S|=m$};
\draw [decoration={brace,amplitude=0.5em},decorate]
        (11,0.9) -- (8,0.9);
\node at (9.5,0.2) {$T$};

\draw[thick] (3.5,1.5) .. controls (4.5,2.5) and (7.5,2.5) .. (8.5,1.5);
\filldraw [black] (3.5,1.5) circle (1pt);
\filldraw [black] (8.5,1.5) circle (1pt);
\draw[thick] (4,1.5) .. controls (5,2.5) and (8,2.5) .. (9,1.5);
\filldraw [black] (4,1.5) circle (1pt);
\filldraw [black] (9,1.5) circle (1pt);
\draw[thick] (4.5,1.5) .. controls (5.5,2.5) and (8.5,2.5) .. (9.5,1.5);
\filldraw [black] (4.5,1.5) circle (1pt);
\filldraw [black] (9.5,1.5) circle (1pt);
\draw[thick] (5,1.5) .. controls (6,2.5) and (9,2.5) .. (10,1.5);
\filldraw [black] (5,1.5) circle (1pt);
\filldraw [black] (10,1.5) circle (1pt);
\draw[thick] (5.5,1.5) .. controls (6.5,2.5) and (9.5,2.5) .. (10.5,1.5);
\filldraw [black] (5.5,1.5) circle (1pt);
\filldraw [black] (10.5,1.5) circle (1pt);
\node at (6.5,3) {deleted edges};

\draw (0,-4) -- (3,-4) -- (3,-3) -- (0,-3) -- (0,-4);
\node at (1.5,-3.5) {$S'_{i-1}$};
\draw (3,-6) -- (8,-6) -- (8,-5) -- (3,-5) -- (3,-6);
\node at (5.5,-5.5) {$S'_i:=S$};
\draw (8,-4) -- (15,-4) -- (15,-3) -- (8,-3) -- (8,-4);
\node at (11.5,-4.5) {$S'_{i+1}:=S'_{i+1}\cup T$};

\draw[thick] (9,-3.5) .. controls (10,-2.5) and (13,-2.5) .. (14,-3.5);
\filldraw [black] (9,-3.5) circle (1pt);
\filldraw [black] (14,-3.5) circle (1pt);
\draw[thick] (8.5,-3.5) .. controls (9.5,-2.5) and (12.5,-2.5) .. (13.5,-3.5);
\filldraw [black] (8.5,-3.5) circle (1pt);
\filldraw [black] (13.5,-3.5) circle (1pt);
\draw[thick] (9.5,-3.5) .. controls (10.5,-2.5) and (13.5,-2.5) .. (14.5,-3.5);
\filldraw [black] (9.5,-3.5) circle (1pt);
\filldraw [black] (14.5,-3.5) circle (1pt);
\filldraw [black] (10,-3.5) circle (1pt);
\filldraw [black] (10.5,-3.5) circle (1pt);
\node at (11.5,-2) {added edges};

\node at (-1.5,3.5) {$B'$};
\node at (-1.5,1.5) {$A'$};
\node at (-1.5,-3.5) {$B'$};
\node at (-1.5,-5.5) {$A'$};
\end{tikzpicture}
\end{center}

\caption{When $|S_i'|>m$, the last $|S_i'|-m$ vertices of $S_i'$ are shifted to~$S_{i+1}'$.} \label{Fig.1}
\end{figure}

{\bf Case 2:} If $|S'_{i-1}|+|S'_i|<m$ and $S'_{i+1}\neq\emptyset$, we shift the first vertex in $S'_{i+1}$ to the end of $S'_{i-1}$ (see~Fig.~\ref{Fig.2}). Notice that after this operation segments $S'_1,S'_2,\ldots, S'_{i-1}$ still satisfy (a) and (b). Moreover, if before the shift $|S'_{i+1}|=1$ while $S'_{i+2}\neq\emptyset$, then, after the shift, we merge $S'_i:=S'_i\cup S'_{i+2}$ and renumber $S'_h:=S'_{h+2}$ for $h\ge i+1$. We then check again if the assumption of Case 2 still holds for the new segments $S'_{i-1}, S'_i, S'_{i+1}$.

W.l.o.g.~we may assume that  $S'_{i+1}\subset A'$. Let $u$ be the first vertex in $S'_{i+1}$ (before the shift). Since the distance on $P'$ between any two vertices $v,w\in A'\setminus\{u\}$ does not change, the only edges of $P[A']$ affected by the shift are those incident with $u$.
Owing to the assumptions $|S'_{i-2}|+|S'_{i-1}|\geq m$ when $i>2$, and $|S'_{i-1}|+|S'_i|<m$, the number of neighbors of $u$ to the left remains unchanged by the shift (in fact, it is precisely $|S'_{i-1}|$).
 On the other hand, the distance on $P'$ between $u$ and any vertex in $A'$ lying to the right  increases (by $|S'_i|$) after the shift, so the number of edges incident with $u$ can only drop.

As for the edges in $P[B']$ affected by the shift, we need to consider only those edges $vw$ for which shifting $u$ in front of $S'_i$ has changed the distance between $v$ and $w$ on $P'$. Therefore, such edges need to be incident with $S'_i$. Notice that since $|S'_{i-1}|+|S'_i|<m$, before the shift each vertex in $S'_i$ was adjacent to some vertex in $S'_{i-2}$, and, since $|S'_{i-2}|+|S'_{i-1}|\geq m$, there were no other edges incident with $S'_i$ and going to the left. Hence, after the shift the distance between $S'_i$ and $S'_{i-2}$ increases by 1 and the number of edges between $S'_{i}$ and $S'_{i-2}$ drops by $|S'_i|$. As for the edges incident with $S'_i$ and going to the right, since after shifting $u$ the distance between vertices in $S'_i$ and vertices to the right decreases by one, we have to add at most $|S'_i|$ new edges. Therefore, again, the total number of edges in $P'[A']\cup P'[B']$ can only drop. (The merging of $S'_i$ and $S'_{i+2}$ in the case when $|S'_{i+1}|=1$ does not alter any edges  of $P$.)

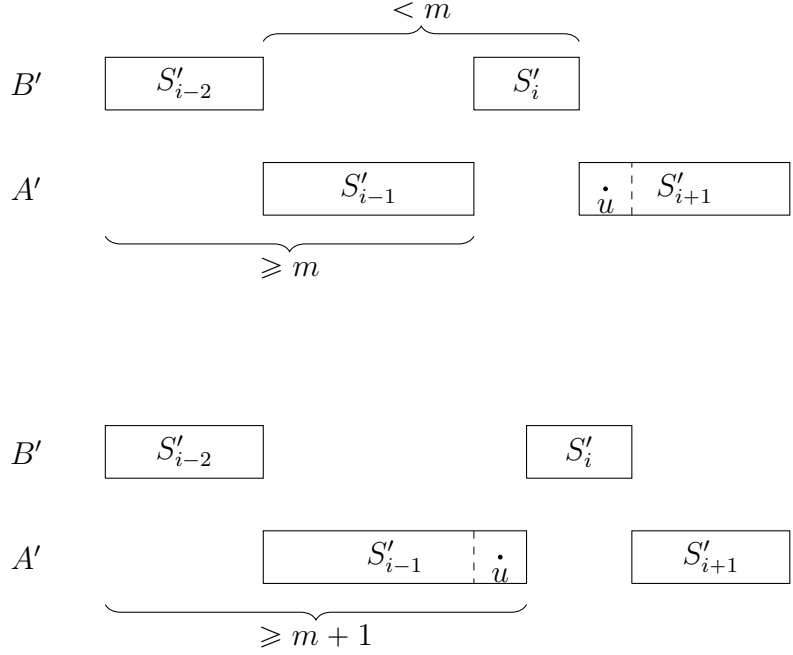
\begin{figure}
\begin{center}
\begin{tikzpicture}[scale=0.7]
\draw (0,3) -- (3,3) -- (3,4) -- (0,4) -- (0,3);
\node at (1.5,3.5) {$S'_{i-2}$};
\draw (3,1) -- (7,1) -- (7,2) -- (3,2) -- (3,1);
\node at (5,1.5) {$S'_{i-1}$};
\draw (7,3) -- (9,3) -- (9,4) -- (7,4) -- (7,3);
\node at (8,3.5) {$S'_{i}$};
\draw (9,1) -- (13,1) -- (13,2) -- (9,2) -- (9,1);
\node at (11,1.5) {$S'_{i+1}$};
\filldraw [black] (9.5,1.5) circle (1pt);
\node [below] at (9.5,1.5) {$u$};
\draw[dashed, ultra thin] (10,1) -- (10,2);

\draw [decoration={brace,amplitude=0.5em},decorate]
        (3,4.3) -- (9,4.3);
\node at (6,4.9) {$< m$};
\draw [decoration={brace,amplitude=0.5em},decorate]
        (7,0.6) -- (0,0.6);
\node at (3.5,0) {$\geq m$};

\draw (0,-4) -- (3,-4) -- (3,-3) -- (0,-3) -- (0,-4);
\node at (1.5,-3.5) {$S'_{i-2}$};
\draw (3,-6) -- (8,-6) -- (8,-5) -- (3,-5) -- (3,-6);
\node at (5.5,-5.5) {$S'_{i-1}$};
\draw (8,-4) -- (10,-4) -- (10,-3) -- (8,-3) -- (8,-4);
\node at (9,-3.5) {$S'_{i}$};
\draw (10,-6) -- (13,-6) -- (13,-5) -- (10,-5) -- (10,-6);
\node at (11.5,-5.5) {$S'_{i+1}$};
\filldraw [black] (7.5,-5.5) circle (1pt);
\node [below] at (7.5,-5.5) {$u$};
\draw[dashed, ultra thin] (7,-5) -- (7,-6);

\draw [decoration={brace,amplitude=0.5em},decorate]
        (8,-6.4) -- (0,-6.4);
\node at (4,-7) {$\geq m+1$};

\node at (-1.5,3.5) {$B'$};
\node at (-1.5,1.5) {$A'$};
\node at (-1.5,-3.5) {$B'$};
\node at (-1.5,-5.5) {$A'$};

\end{tikzpicture}
\end{center}
\caption{When $|S_{i-1}'|+|S_i'|<m$, the first vertex in $S_{i+1}'$ is shifted at the end of $S_{i-1}'$.} \label{Fig.2}
\end{figure}

{\bf Case 3 (Termination):} If $|S'_{i-1}|+|S'_i|<m$ and $S'_{i+1}=\emptyset$, we merge $S'_{i-1}:=S'_{i-1}\cup S'_i$ and finish the procedure.
By doing so we have to add all edges between $S'_{i-1}$ and $S'_i$ and, similarly as in the initiation step, the number of such edges is at most $(m-1)^2/4$.

Summing up, in the initiation step we added at most $(m-1)^2/4$ edges, then in each step we haven't increased the number of edges until maybe at the termination where we added at most $(m-1)^2/4$ edges. Thus
\begin{equation}\label{eq:1}
|E(P[A])|+|E(P[B])| \geq |E(P'[A'])|+ |E(P'[B'])| - \frac{(m-1)^2}{2}.
\end{equation}

It remains to bound the number of edges in $P'[A'] \cup P'[B']$. Suppose that the number of segments corresponding to the partition $A'\cup B'$ is $q$ and let $x_1, x_2,\ldots,x_q$ be the sizes of consecutive segments. We first count the number of edges in each segment. By (a) for each $i=1,2,\ldots,q$ we have $x_i\leq m$, hence each segment induces in $P'[A']\cup P'[B']$ exactly ${x_i\choose 2}$ edges. Next, we count the number of edges with endpoints in two different segments. By (b) for each $i=2,3,\ldots,q-1$ we have $x_{i-1}+x_i\geq m$ and $x_i+x_{i+1} \geq m$. Therefore if we consider the edges incident with $S'_{i-1}$ and going to the right, then each such edge can be incident only with $S'_{i+1}$. Moreover, the number of such edges is
\[ (m-x_i) + (m-x_i-1) + \ldots + 1 = \frac{(m-x_i)(m-x_i+1)}{2}.\]

Hence
\begin{align*}
|E(P'[A'])| + |E(P'[B'])| & = \sum_{i=1}^q {x_i\choose 2} + \sum_{i=2}^{q-1} \frac{(m-x_i)(m-x_i+1)}{2} \\
& \ge\sum_{i=1}^q \left({x_i\choose 2} +\frac{(m-x_i)(m-x_i+1)}{2}\right)  - m^2=\sum_{i=1}^qx_if(x_i)-m^2\\
& \ge\sum_{i=1}^qx_if(\ell_m)-m^2=f(\ell_m)|V(P)|-m^2.
\end{align*}
Finally, by \eqref{eq:1},
\[|E(P[A])| + |E(P[B])|\ge f(\ell_m)|V(P)|-2m^2.\]
\end{proof}

\section{Upper bound -- The proof of Theorem \ref{thm:upper}}\label{upp}

The proof of Theorem~\ref{thm:upper} is similar to that of \cite[Theorem 1.2]{ADRRS} (with $k=1$) and therefore we will only discuss the necessary amendments. In turn, the proof of \cite[Theorem 1.2]{ADRRS} followed a general outline of the proof of \cite[Theorem 1.1]{DRRS} which was based by nowadays standard method of absorption. We suggest the readers read Section 2 in \cite{DRRS} and Section 4 in \cite{ADRRS} before delving into the rest of this section.
In our proof all four pillars of the absorbing method: the Connecting Lemma, the Reservoir Lemma, the Absorbing Lemma, and the Covering Lemma, are exactly the same as, respectively, Lemmas 6.2, 4.2, 4.3, 4.4 in \cite{ADRRS}, except for the assumption on $p$.

To describe this fundamental change with respect to the proof in \cite{ADRRS}, we begin by  recalling the notions of a~bridge and a braid graph which played a crucial role therein.

\begin{dfn}\label{bri}
	For $r\ge2$, two sequences of vertices $\seq{v} = (v_1, v_2, \ldots, v_r)$ and $\seq{u} = (u_1, u_2, \ldots, u_r)$ of a graph $G$ are said to \emph{form an $r$-bridge} (or just \emph{a bridge} if the value of $r$ is clear from the context) if for each $i = 1,2, \ldots, r$, the vertex $v_i$ is adjacent in $G$ to all $u_1, u_2, \ldots, u_{i}$. 
\end{dfn}

\begin{dfn}\label{bra}
	For $t\ge1$, $\ell\ge2$, and  $1\le r\le\ell$, let $B(\ell,r,t)$ be \emph{the braid  graph} consisting of $t$ vertex-disjoint $\ell$-cliques $K_\ell^{(1)},K_\ell^{(2)},\ldots,K_\ell^{(t)}$, with vertices ordered arbitrarily,  where for each $i=1,\dots,t-1$, the last $r$ vertices of $K_\ell^{(i)}$ and the first $r$ vertices of $K_\ell^{(i+1)}$ form an  $r$-bridge. We write shortly $B_t=B(\ell,r,t)$ and for any $s\geq 1$, we denote by $sB_t$, the union of $s$ vertex disjoint copies of $B_t$.
\end{dfn}
\noindent See Fig. \ref{Fig.0} and \ref{Fig.00} for examples of a bridge and a braid.

\begin{figure}

\begin{center}
\begin{tikzpicture}[scale=1]

\draw[thick] (3.5,1.5) .. controls (4.5,3.5) and (7.5,3.5) .. (8.5,1.5);
\filldraw [black] (3.5,1.5) circle (1pt);
\filldraw [black] (8.5,1.5) circle (1pt);
\draw[thick] (4,1.5) .. controls (5,3.5) and (8,3.5) .. (9,1.5);
\draw[thick] (4,1.5) .. controls (5,3) and (7.5,3) .. (8.5,1.5);
\filldraw [black] (4,1.5) circle (1pt);
\filldraw [black] (9,1.5) circle (1pt);
\draw[thick] (4.5,1.5) .. controls (5.5,3.5) and (8.5,3.5) .. (9.5,1.5);
\draw[thick] (4.5,1.5) .. controls (5.5,3) and (8,3) .. (9,1.5);
\draw[thick] (4.5,1.5) .. controls (5.5,2.5) and (7.5,2.5) .. (8.5,1.5);
\filldraw [black] (4.5,1.5) circle (1pt);
\filldraw [black] (9.5,1.5) circle (1pt);
\draw[thick] (5,1.5) .. controls (6,3.5) and (9,3.5) .. (10,1.5);
\draw[thick] (5,1.5) .. controls (6,3) and (8.5,3) .. (9.5,1.5);
\draw[thick] (5,1.5) .. controls (6,2.5) and (8,2.5) .. (9,1.5);
\draw[thick] (5,1.5) .. controls (6,2) and (7.5,2) .. (8.5,1.5);
\filldraw [black] (5,1.5) circle (1pt);
\filldraw [black] (10,1.5) circle (1pt);

\end{tikzpicture}
\end{center}

\caption{The $4$-bridge.} \label{Fig.0}
\end{figure}
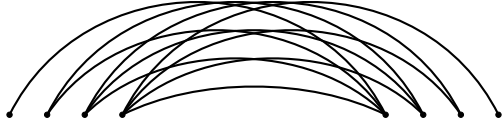

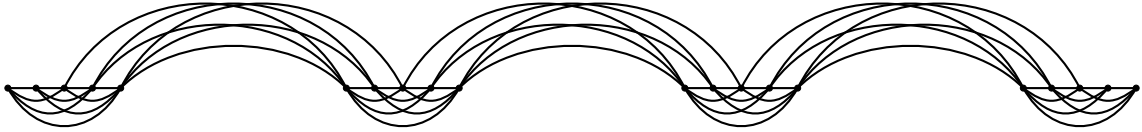
\begin{figure}
	
	\begin{center}
		\begin{tikzpicture}[scale=0.75]
		
		\filldraw [black] (2.5,1.5) circle (1.5pt);
		\filldraw [black] (3,1.5) circle (1.5pt);
		\filldraw [black] (3.5,1.5) circle (1.5pt);
		\filldraw [black] (4,1.5) circle (1.5pt);
		\filldraw [black] (4.5,1.5) circle (1.5pt);
		
		\filldraw [black] (8.5,1.5) circle (1.5pt);
		\filldraw [black] (9,1.5) circle (1.5pt);
		\filldraw [black] (9.5,1.5) circle (1.5pt);
		\filldraw [black] (10,1.5) circle (1.5pt);
		\filldraw [black] (10.5,1.5) circle (1.5pt);
		
		\filldraw [black] (14.5,1.5) circle (1.5pt);
		\filldraw [black] (15,1.5) circle (1.5pt);
		\filldraw [black] (15.5,1.5) circle (1.5pt);
		\filldraw [black] (16,1.5) circle (1.5pt);
		\filldraw [black] (16.5,1.5) circle (1.5pt);
		
		\filldraw [black] (20.5,1.5) circle (1.5pt);
		\filldraw [black] (21,1.5) circle (1.5pt);
		\filldraw [black] (21.5,1.5) circle (1.5pt);
		\filldraw [black] (22,1.5) circle (1.5pt);
		\filldraw [black] (22.5,1.5) circle (1.5pt);

		\draw[thick] (4.5,1.5) .. controls (4,0.6) and (3,0.6) .. (2.5,1.5);
		
		\draw[thick] (4,1.5) .. controls (3.5,0.9) and (3,0.9) .. (2.5,1.5);
		\draw[thick] (4.5,1.5) .. controls (4,0.9) and (3.5,0.9) .. (3,1.5);
		
		\draw[thick] (3.5,1.5) .. controls (3.1,1.2) and (2.9,1.2) .. (2.5,1.5);
		\draw[thick] (4,1.5) .. controls (3.6,1.2) and (3.4,1.2) .. (3,1.5);
		\draw[thick] (4.5,1.5) .. controls (4.1,1.2) and (3.9,1.2) .. (3.5,1.5);
		
		\draw[thick] (2.5,1.5) -- (4.5,1.5);
		
		\draw[thick] (10.5,1.5) .. controls (10,0.6) and (9,0.6) .. (8.5,1.5);
		
		\draw[thick] (10,1.5) .. controls (9.5,0.9) and (9,0.9) .. (8.5,1.5);
		\draw[thick] (10.5,1.5) .. controls (10,0.9) and (9.5,0.9) .. (9,1.5);
		
		\draw[thick] (9.5,1.5) .. controls (9.1,1.2) and (8.9,1.2) .. (8.5,1.5);
		\draw[thick] (10,1.5) .. controls (9.6,1.2) and (9.4,1.2) .. (9,1.5);
		\draw[thick] (10.5,1.5) .. controls (10.1,1.2) and (9.9,1.2) .. (9.5,1.5);
		
		\draw[thick] (8.5,1.5) -- (10.5,1.5);
		
		\draw[thick] (16.5,1.5) .. controls (16,0.6) and (15,0.6) .. (14.5,1.5);
		
		\draw[thick] (16,1.5) .. controls (15.5,0.9) and (15,0.9) .. (14.5,1.5);
		\draw[thick] (16.5,1.5) .. controls (16,0.9) and (15.5,0.9) .. (15,1.5);
		
		\draw[thick] (15.5,1.5) .. controls (15.1,1.2) and (14.9,1.2) .. (14.5,1.5);
		\draw[thick] (16,1.5) .. controls (15.6,1.2) and (15.4,1.2) .. (15,1.5);
		\draw[thick] (16.5,1.5) .. controls (16.1,1.2) and (15.9,1.2) .. (15.5,1.5);
		
		\draw[thick] (14.5,1.5) -- (16.5,1.5);
		
		\draw[thick] (22.5,1.5) .. controls (22,0.6) and (21,0.6) .. (20.5,1.5);
		
		\draw[thick] (22,1.5) .. controls (21.5,0.9) and (21,0.9) .. (20.5,1.5);
		\draw[thick] (22.5,1.5) .. controls (22,0.9) and (21.5,0.9) .. (21,1.5);
		
		\draw[thick] (21.5,1.5) .. controls (21.1,1.2) and (20.9,1.2) .. (20.5,1.5);
		\draw[thick] (22,1.5) .. controls (21.6,1.2) and (21.4,1.2) .. (21,1.5);
		\draw[thick] (22.5,1.5) .. controls (22.1,1.2) and (21.9,1.2) .. (21.5,1.5);
		
		\draw[thick] (20.5,1.5) -- (22.5,1.5);

		\draw[thick] (3.5,1.5) .. controls (4.5,3.5) and (7.5,3.5) .. (8.5,1.5);
		
		\draw[thick] (4,1.5) .. controls (5,3.5) and (8,3.5) .. (9,1.5);
		\draw[thick] (4,1.5) .. controls (5,3) and (7.5,3) .. (8.5,1.5);	
		
		\draw[thick] (4.5,1.5) .. controls (5.5,3.5) and (8.5,3.5) .. (9.5,1.5);
		\draw[thick] (4.5,1.5) .. controls (5.5,3) and (8,3) .. (9,1.5);
		\draw[thick] (4.5,1.5) .. controls (5.5,2.5) and (7.5,2.5) .. (8.5,1.5);
		
		\draw[thick] (9.5,1.5) .. controls (10.5,3.5) and (13.5,3.5) .. (14.5,1.5);
		
		\draw[thick] (10,1.5) .. controls (11,3.5) and (14,3.5) .. (15,1.5);
		\draw[thick] (10,1.5) .. controls (11,3) and (13.5,3) .. (14.5,1.5);	
		
		\draw[thick] (10.5,1.5) .. controls (11.5,3.5) and (14.5,3.5) .. (15.5,1.5);
		\draw[thick] (10.5,1.5) .. controls (11.5,3) and (14,3) .. (15,1.5);
		\draw[thick] (10.5,1.5) .. controls (11.5,2.5) and (13.5,2.5) .. (14.5,1.5);
		
		\draw[thick] (15.5,1.5) .. controls (16.5,3.5) and (19.5,3.5) .. (20.5,1.5);
		
		\draw[thick] (16,1.5) .. controls (17,3.5) and (20,3.5) .. (21,1.5);
		\draw[thick] (16,1.5) .. controls (17,3) and (19.5,3) .. (20.5,1.5);	
		
		\draw[thick] (16.5,1.5) .. controls (17.5,3.5) and (20.5,3.5) .. (21.5,1.5);
		\draw[thick] (16.5,1.5) .. controls (17.5,3) and (20,3) .. (21,1.5);
		\draw[thick] (16.5,1.5) .. controls (17.5,2.5) and (19.5,2.5) .. (20.5,1.5);

		\end{tikzpicture}
	\end{center}
	
	\caption{The $B(5,3,4)$ braid (consisting of 4 ordered cliques $K_5$ joined by 3-bridges).} \label{Fig.00}
\end{figure}

As already mentioned above, the proof in \cite{ADRRS} is based on the standard absorbing method whose four main ingredients, Connecting, Reservoir, Absorbing,  and Covering Lemmas, all claim the existence of certain $m$-paths in $G\cup G(n,p)$, either of finite length $2\ell t$ for some $t\ge2$, or easily assembled from such. Each $m$-path of length $2\ell t$ can be decomposed or embedded into the union of an $\ell$-blow-up  $P_{2t}(\ell)$ of a path of length $2t$ and the pair of braids $2B_t$, where $r=m-\ell$ (see \cite[Prop. 5.5]{ADRRS}). The proofs of the four crucial lemmas consist of a deterministic part and a probabilistic part, the former building  several copies of $P_{2t}(\ell)$ in $G$, the latter, based on \cite[Prop. 5.8]{ADRRS}, a.a.s., complementing at least one of them with a suitable copy of $2B_t$ (or its subgraph) coming from $G(n,p)$. And only the latter part of that proof has to be changed.

We now describe this change and its consequences. Recall the definitions of the density parameters $d_G$ and $m_G$ from Section \ref{mr}. Clearly, as $2B_t$ is a disjoint union of two copies of $B_t$, we have $m_{2B_t}=m_{B_t}$.
In the course of the  proof of \cite[Prop. 5.8]{ADRRS} it was shown that if $\ell\ge r(r+1)$, $r\ge1$, then for all $t\ge1$, the maximum density in a braid is obtained by an $\ell$-clique, hence $m_{B_t}=d_{K_\ell}=\ell/2$. As a result, the assumption $p\ge Cn^{-2/\ell}$ was sufficient in \cite{ADRRS} to carry on the whole proof.
Now,  under the opposite assumption $\ell<r(r+1)$, the braid graph itself achieves the maximum density among all its subgraphs, that is $m_{B_t}=d_{B_t}$.

\begin{prop}\label{<d} For all $t\ge 2$ and $1\le r\le \ell$ satisfying
\begin{equation*}
\ell<r(r+1),
\end{equation*}
we have $m_{B_t}=d_{B_t}$.
\end{prop}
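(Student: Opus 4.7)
The plan is to show $d_H\le d_{B_t}$ for every subgraph $H\subseteq B_t$ with $v_H\ge 2$, in three stages: monotonicity of $d_{B_s}$ in $s$, reduction to a connected induced sub-braid, and a density comparison $d_H\le d_{B_s}$. For the first stage, setting $a=\binom{\ell}{2}$ and $b=\binom{r+1}{2}$, so that $d_{B_s}=(sa+(s-1)b)/(s\ell-1)$, a short algebraic computation gives
\[
d_{B_{s+1}}-d_{B_s}=\frac{b(\ell-1)-a}{(s\ell-1)\bigl((s+1)\ell-1\bigr)}=\frac{(\ell-1)\bigl(r(r+1)-\ell\bigr)}{2(s\ell-1)\bigl((s+1)\ell-1\bigr)},
\]
which is strictly positive by the hypothesis $\ell<r(r+1)$. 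Hence $s\mapsto d_{B_s}$ is strictly increasing and, in particular, $d_{B_s}\le d_{B_t}$ for every $1\le s\le t$.

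For the second stage, for a fixed vertex set the induced subgraph has the most edges, so WLOG $H$ is induced. The mediant inequality $d_{H_1\sqcup H_2}\le\max(d_{H_1},d_{H_2})$ (valid when $v_{H_i}\ge 2$) then reduces to $H$ connected. Since every edge of $B_t$ lies inside a single clique or between two consecutive cliques, a connected induced $H$ is supported on a contiguous run of $s\le t$ cliques and meets each of them. Shifting indices, $H\subseteq B_s$, and by Stage~1 it suffices to prove $d_H\le d_{B_s}$.

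For the third stage I would induct on $s$. Peeling off the last clique, let $H_1=H\cap B_{s-1}$, let $a_s=|V(H)\cap V(K_\ell^{(s)})|\in[0,\ell]$, and let $\beta_{s-1}$ count the bridge edges of $H$ between the $(s-1)$st and $s$th cliques, so that $e_H=e_{H_1}+\binom{a_s}{2}+\beta_{s-1}$. Combining the inductive hypothesis $e_{H_1}\le d_{B_{s-1}}(v_{H_1}-1)$ with a sharp upper bound on $\beta_{s-1}$ (which must reflect how many of the $r$ bridge positions are occupied on each side—not merely $\beta_{s-1}\le b$) and using the Stage~1 value of $d_{B_s}-d_{B_{s-1}}$, the target $d_H\le d_{B_s}$ reduces to a one-parameter inequality in $a_s$ whose equality case is exactly $H=B_s$.

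The main obstacle is bounding $\beta_{s-1}$ tightly enough. The crude bound $\beta_{s-1}\le b$ is too loose: even for $\ell=r=2$, the subgraph consisting of the last clique together with a single bridge-heavy vertex of the previous clique would violate the naive inductive bound, though it does satisfy $d_H<d_{B_s}$ directly. The sharp bound must account simultaneously for how many of the last $r$ vertices of clique $s-1$ lie in $H_1$ and for how many of the first $r$ vertices of clique $s$ lie in $H$, and needs separate handling in the regimes $\ell\ge 2r$ and $\ell<2r$ (where the bridge positions on the two sides of a clique overlap). Overcoming this positional bookkeeping is the principal technical effort of the proof.
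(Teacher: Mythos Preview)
Your Stages~1 and~2 are correct and useful. The monotonicity $d_{B_s}<d_{B_{s+1}}$ is exactly what the paper also uses, and the reduction to a connected induced subgraph sitting in a contiguous run of cliques is sound.

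The genuine gap is Stage~3. You have not proved the inductive step; you have only described what would be needed and then conceded that the naive bound $\beta_{s-1}\le b=\binom{r+1}{2}$ is too weak (your $\ell=r=2$ example is apt). Promising that a sharper position-sensitive bound ``must account simultaneously'' for the occupied bridge positions on both sides, and that one ``needs separate handling'' of the regimes $\ell\ge 2r$ and $\ell<2r$, is an outline, not an argument. As it stands nothing prevents an induced subgraph $H\subseteq B_s$ that occupies all of $V_1,\dots,V_{s-1}$ and only a bridge-heavy portion of $V_s$ from defeating your induction hypothesis on $H_1=H\cap B_{s-1}$; the whole difficulty is precisely to rule this out, and you have not done so.

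The paper avoids this bookkeeping altogether by a vertex-swapping (extremal) argument rather than a peel-off induction. It proves the stronger statement that $B_t$ is strictly balanced. For $\ell\in\{r,r+1\}$ one has $B_t=P_v^r$, handled directly. For $\ell\ge r+2$ one argues by induction on~$t$: take a proper $H\subseteq B_t$ with $d_H=m_{B_t}$ and, among such, with $|V(H)\cap V_t|$ minimal. By induction and Stage~1, $V(H)\cap V_t\neq\emptyset$. Let $s$ be the largest index with $V_s\not\subseteq V(H)$. A direct (if tedious) calculation shows $s\le t-1$: if $H$ already contained $V_1,\dots,V_{t-1}$ but only part of $V_t$, then $d_H<d_{B_t}$. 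Now move the rightmost $\min(|V(H)\cap V_t|,\ \ell-|V(H)\cap V_s|)$ vertices of $V(H)\cap V_t$ into the rightmost empty slots of $V_s$, obtaining $H'$ with $v_{H'}=v_H$ and strictly smaller $|V(H')\cap V_t|$. A comparison of degrees of the swapped vertices across the relevant bridge (using that the rightmost vertices of one clique dominate, degree-wise, the leftmost vertices of the next) gives $e(H')\ge e(H)$, contradicting the minimality of $|V(H)\cap V_t|$. The point is that comparing degrees of vertices in \emph{symmetric} bridge positions is far cleaner than the two-sided occupancy bound you were aiming for.
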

\noindent We defer the proof of Proposition \ref{<d} to the end of this section.

 In view of Proposition \ref{<d}, setting $d_t=d_{B_t}$, our assumption on $p$ relaxes to $p\ge Cn^{-1/d_t}$. Most importantly,  $d_{t}$  is close to (but smaller than) $f(\ell)$.
Indeed, recall that
$$f(\ell)=\frac{\binom \ell2+\binom{r+1}2}{\ell}.$$
On the other hand,
$$d_t = \frac{t\binom \ell 2 + (t-1)\binom{r+1}2}{t\ell-1},$$
which can be rewritten as
\[
d_t = f(\ell) - \frac{(\ell-1)(r(r+1)-\ell)}{2\ell(t\ell-1)}.
\]
Hence, for $\ell<r(r+1)$, $d_t$ is a strictly increasing function of $t$  and
\begin{equation}\label{xid}
\lim_{t\to\infty}d_t=f(\ell).
\end{equation}

Next, we prove an analog of  \cite[Proposition 5.8]{ADRRS} which reflects the change in $m_{B_t}$ stemming from Proposition \ref{<d}.
For a graph $G$ with at least one edge, set
\[
\Psi_G=n^{v_G}p^{e_G}\quad\mbox{and}\quad\Phi_G=\min_{H\subseteq G, e_H>0} \Psi_H.
\]

\begin{prop}\label{5.8}
Let $t\ge1$, $r\geq 1$, $\l< r(r+1)$, $C\ge1$, and $p\ge C n^{-1/d_t}$.
Further, let $\tau>0$, $s\ge 1$,  $B=sB_t$, $F\subset B$, and $\cF$ be a family of at least $\tau n^{v_F}$ copies of $F$ in $K_n$.
Finally, let $X$ be the number of copies of $F$ belonging to $\cF$ which are present in $G(n,p)$.
Then there exists a constant $c_{F}>0$ such that
\[
\PP(X \le\tau\Psi_F/2)\le \exp\{-\tau^2c_{F}Cn\}.
\]
\end{prop}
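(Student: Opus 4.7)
The plan is to reduce Proposition \ref{5.8} to the general subgraph-count concentration result \cite[Theorem 2.2]{ADRRS} in exactly the same way as \cite[Proposition 5.8]{ADRRS} is derived there; the only modification required is in the density calculation that underlies the hypothesis on $p$. In the original regime $\ell \ge r(r+1)$, the bound $p \ge Cn^{-2/\ell}$ was used together with $m_{B_t} = \ell/2$ to guarantee $\Phi_F \ge Cn$. Here, in the opposite regime $\ell < r(r+1)$, Proposition \ref{<d} gives $m_{B_t} = d_t$, so the analogous threshold becomes $p \ge Cn^{-1/d_t}$, exactly as assumed. Beyond this density calculation, the proof should proceed essentially verbatim.

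The density computation I would carry out is as follows. Let $H$ be any subgraph of $F$ with at least one edge and write $H_1, \ldots, H_k$ for its connected components. Since $F \subseteq B = sB_t$ is a disjoint union of $s$ copies of $B_t$, each $H_i$ is a connected subgraph of a single copy of $B_t$, and so by Proposition \ref{<d},
\[
d_{H_i} \le m_{B_t} = d_t, \qquad \text{i.e.,} \qquad e_{H_i} \le d_t(v_{H_i}-1).
\]
Consequently,
\[
\Psi_{H_i} = n^{v_{H_i}} p^{e_{H_i}} \ge C^{e_{H_i}} n^{v_{H_i} - e_{H_i}/d_t} \ge C^{e_{H_i}} n,
\]
and multiplying over the components yields $\Psi_H = \prod_i \Psi_{H_i} \ge C n$. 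Taking the minimum over $H$, this gives $\Phi_F \ge Cn$.

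With $\Phi_F \ge Cn$ in hand and $\EE X \ge |\cF|\,p^{e_F} \ge \tau \Psi_F$, I would invoke the lower-tail form of Janson's inequality (as packaged in \cite[Theorem 2.2]{ADRRS}) to conclude
\[
\PP\bigl(X \le \tau\Psi_F/2\bigr) \le \exp\{-c\,\tau^2 \Phi_F\} \le \exp\{-\tau^2 c_F C n\}
\]
for a suitable constant $c_F>0$ depending only on $F$. The main obstacle is really just the density estimate; once Proposition \ref{<d} is invoked to bound $m_{B_t}$ from above by $d_t$, the probabilistic step is a black box inherited from \cite{ADRRS}, so the whole burden of the proof rests on that short calculation.
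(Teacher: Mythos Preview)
Your proposal is correct and follows essentially the same approach as the paper: both reduce to \cite[Theorem 2.2]{ADRRS} by verifying $\Phi_F\ge Cn$, using Proposition~\ref{<d} to bound $m_{B_t}$ by $d_t$. The only cosmetic difference is that you handle an arbitrary $H\subseteq F$ by decomposing into connected components and multiplying the bounds $\Psi_{H_i}\ge C^{e_{H_i}}n$, whereas the paper first treats connected $H$ directly and then observes that a disconnected $H$ cannot achieve the minimum; the content is the same.
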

\begin{proof}
In view of  \cite[Theorem 2.2]{ADRRS}, setting $c_F=4^{-e_F}/8$, it suffices to show that $\Phi_F\geq Cn$. First assume that $F'\subset F$ is connected, in particular, $F'\subset B$. Since $C\ge 1$, we have $np^{d_t}\ge C\ge1$. Thus,
$$\Psi_{F'}\ge \Phi_{B_t}=\min_{H\subset B_t,e_H>0}n\left(np^{d_H}\right)^{v_H-1}\ge n\left(np^{m_{B_t}}\right)= n\left(np^{d_t}\right)\ge Cn,$$
where the middle inequality follows, since $v_H\ge2$ and  $np^{d_H}\ge np^{d_t}\ge1$.
On the other hand,  if $F'=F_1\cup F_2\subset F$, where $F_1$, $F_2$ are vertex disjoint and, say, $F_1$ is connected, then, by the above bound applied to $F_1$,
$$\Psi_{F'}=\Psi_{F_1}\Psi_{F_2}\ge Cn\Psi_{F_2}> \Psi_{F_2},$$
so a disconnected $F'$ does not achieve the minimum in $\Phi_F$.
In summary,
$\Phi_F\ge Cn$ and the conclusion follows by  \cite[Theorem 2.2]{ADRRS}.
\end{proof}

We are now in position to outline the proof of Theorem~\ref{thm:upper}.
\begin{proof}[Proof of Theorem \ref{thm:upper} (outline)]
We essentially repeat the entire proof of  \cite[Theorem 1.2]{ADRRS} with $k=1$ in which all applications of \cite[Proposition 5.8]{ADRRS} are replaced with applications of Proposition \ref{5.8}.

The exponent in the threshold probability $p$ is thus determined by the largest value of $t$ with which we apply Proposition \ref{5.8}. In \cite{ADRRS}, Proposition 5.8 has been applied four times (to either $2B_t$ itself or to one of its subgraphs), namely, in the proofs of Lemma~6.2 (Connecting Lemma) with $t=4$, Lemma 4.2 (Reservoir Lemma) again with $t=4$, Proposition 7.2 (part of the proof of the Absorbing Lemma 4.3) twice with $t=2$, and, last but not least, Claim 8.2 (part of the proof of the Covering Lemma 4.4) with $t=2M$, where $M\ge 1/(4\ell \gamma^3)$ by inequality (13) in \cite{ADRRS}.

The parameter $\gamma$, which within the proof of the Covering Lemma alone, must satisfy only the restriction $\gamma\le \eps/12$,  in the main proof of \cite[Theorem 1.2]{ADRRS}, is subject to much stronger restrictions, stemming from the other three main lemmas. Therefore, it would be very cumbersome and, in fact, not necessary for us, to determine the dependence of $M$ on $\eps$  explicitly. All we can say is that $M\to\infty$ with $\eps\to 0$.

To finish the proof, it thus suffices to assume that $p\ge n^{-1/d_{2M}+\eps}$ (note that such a $p$ satisfies the assumption of Proposition \ref{5.8}),
 set $\mu=1/d_{2M}+\eps-1/f(\ell)$, so that the above assumption on $p$ becomes $p\ge n^{-1/f(\ell)-\mu}$, and observe that, by \eqref{xid}, we have  $\mu\to0$ with  $\eps\to 0$ (as then $M\to\infty$).

 Now, the proof of \cite[Theorem 1.2]{ADRRS} can be repeated mutatis mutandis.
\end{proof}

We conclude this section with the proof of Proposition \ref{<d}.
\begin{proof}[Proof of Proposition \ref{<d}] We are going to prove a slightly stronger statement, namely that $B_t$ is \emph{strictly} balanced, that is, for all \emph{proper} subgraphs $H$ of $B_t$ we have $d_H<d_t$.
First, consider the case when $\ell\in\{r,r+1\}$. Then, $B_t=B(\ell,r,t)$ is $P^r_{v}$, the $r$-th power of the $v$-vertex path, $v=t\ell$, and we have $$d_{P^r_{v}}=\frac{vr-\binom{r+1}r}{v-1}=r-\frac{\binom r2}{v-1}.$$
Note that $d_{P^r_{v}}$ is a strictly increasing function of $v$.
Therefore, it is easy to see that powers of paths are strictly balanced. Indeed, a proper subgraph $H$ of $P^r_{v}$ is either the $r$-th power of a  path $P_{v'}$, $v'<v$, or a \emph{proper spanning} subgraph of the $r$-th power of a path $P_{v'}$, $v'\le v$, so in each case $d_H<d_{P^r_{v}}$.

From now on assume that $\ell\ge r+2$. Our proof is by induction on $t$. It is easy to check that $B_1=K_\ell$ is strictly balanced.
Let $t\ge2$ and assume that $B_{t-1}$ is strictly balanced.
Recall that the vertex set $V$ of $B_t$  is split into $t$ disjoint cliques $K_\ell^{(1)},\dots,K_\ell^{(t)}$ the vertices of which are ordered (say, from left to right). Set $V_i=V(K_\ell^{(i)})$, $i=1,\dots,t$.

Let $H$ be a proper subgraph of $B_t$ such that $d_H=m_{B_t}$ and $|V(H)\cap V_t|$ is minimal. First observe that $|V(H)\cap V_t|>0$, as otherwise $H\subset B_{t-1}$, and by the induction assumption and the strict monotonicity of $d_t$, we have $m_{B_t}=d_H\le d_{t-1}<d_t$. Let $s$ be the largest index such that $V(H) \not\supset V_s$.

\begin{claim}\label{claim:s_less_t}
	We have $s \leq t-1$.
\end{claim}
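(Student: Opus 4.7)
The plan is to argue by contradiction. Suppose $s=t$, so that $A:=V(H)\cap V_t$ is a proper nonempty subset of $V_t$ with $a:=|A|\in[1,\ell-1]$. The strategy is to compare $d_H$ with the densities of two natural modifications of $H$: the \emph{completion} $H^+:=B_t[V(H)\cup V_t]$ obtained by adding back the missing $\ell-a$ vertices of $V_t$, and the \emph{truncation} $H^-:=B_t[V(H)\setminus V_t]=H-A$.

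By maximality of $d_H=m_{B_t}$ we have $d_{H^+}\le d_H$. For the truncation, $H^-\subseteq B_{t-1}$, and the induction hypothesis that $B_{t-1}$ is strictly balanced combined with the strict monotonicity of $t\mapsto d_t$ established just before the proposition gives $d_{H^-}\le m_{B_{t-1}}=d_{B_{t-1}}<d_{B_t}\le d_H$, strictly. Writing $e_A$ and $\Delta_B$ for the numbers of bridge edges between $V(H)\cap V_{t-1}$ and, respectively, $A$ and $V_t\setminus A$, the mediant reading of these two density comparisons yields
\[\frac{\ell+a-1}{2}+\frac{\Delta_B}{\ell-a}\;\le\;d_H\;<\;\frac{a-1}{2}+\frac{e_A}{a},\]
which upon cross-multiplying and eliminating $\Delta_B$ via $\Delta_B\le\binom{r+1}{2}-e_A$ becomes $e_A>R(a)$, where $R(a):=a\binom{r+1}{2}/\ell+a(\ell-a)/2$.

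To derive a contradiction, I would invoke the structural upper bound $e_A\le B(a',r):=a'r-\binom{a'}{2}$, where $a':=|A\cap\{\text{first $r$ vertices of }V_t\}|\le\min(a,r)$; this comes from the fact that the $j$-th of the first $r$ vertices of $V_t$ has exactly $j$ bridge edges to $V_{t-1}$. In the case $a\le r$, taking the maximal $a'=a$ shows that $B(a,r)\le R(a)$ is equivalent to $\ell^2-(2r+1)\ell+r(r+1)\ge 0$; the two roots of this quadratic in $\ell$ are $r$ and $r+1$, so the inequality is strict for every integer $\ell\ge r+2$. In the case $a>r$, one has $B(a',r)\le B(r,r)=\binom{r+1}{2}$, and $\binom{r+1}{2}\le R(a)$ reduces to $a\ge r(r+1)/\ell$, which again holds strictly since $a\ge r+1>r(r+1)/\ell$ for $\ell\ge r+2$. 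In either case $e_A\le B(a',r)<R(a)<e_A$, the desired contradiction.

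The main obstacle is this final step: the crude bound $a\ell<r(r+1)$ that one obtains by using only $e_A\le\binom{r+1}{2}$ does not rule out small values of $a$ (for instance, $a=1$ remains compatible with it). The argument genuinely needs the sharper count $e_A\le a'r-\binom{a'}{2}$ together with the observation that the quadratic $\ell^2-(2r+1)\ell+r(r+1)$ is strictly positive throughout $\ell\ge r+2$.
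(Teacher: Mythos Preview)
Your mediant approach is an attractive alternative to the paper's direct polynomial computation, and once the key inequality $e_A>R(a)$ is in hand, your case analysis for $a\le r$ versus $a>r$ is clean and correct. The paper instead writes out $d_H$ explicitly (separately for $x\le r$ and $x\ge r+1$) and checks $d_H<d_t$ by expanding a polynomial $f(\ell,r,t,x)$ or $g(\ell,r,t,x)$; your route is more conceptual.

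There is, however, a genuine gap in the elimination of $\Delta_B$. From the two density comparisons you correctly obtain
\[
\frac{a\ell}{2}+\frac{a\,\Delta_B}{\ell-a}<e_A,
\]
and you then invoke $\Delta_B\le\binom{r+1}{2}-e_A$. But this inequality points the wrong way: replacing $\Delta_B$ by an \emph{upper} bound only weakens the left-hand side, so it cannot strengthen the lower bound on $e_A$ to $e_A>R(a)$. What you actually need is $\Delta_B\ge\binom{r+1}{2}-e_A$, i.e.\ $e_A+\Delta_B\ge\binom{r+1}{2}$; substituting this (as an equality) into the displayed inequality does give $\ell e_A>\frac{a\ell(\ell-a)}{2}+a\binom{r+1}{2}$, hence $e_A>R(a)$. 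The inequality $e_A+\Delta_B\ge\binom{r+1}{2}$ says precisely that every bridge edge between $V_{t-1}$ and $V_t$ has its $V_{t-1}$-endpoint in $V(H)$, i.e.\ the last $r$ vertices of $V_{t-1}$ lie in $V(H)$; this does \emph{not} follow from $s=t$ alone.

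The paper's own proof in Appendix~B sidesteps this by reading ``$s=t$'' as ``$V_i\subset V(H)$ for all $i<t$'' and computing $d_H$ under that hypothesis; with $V_{t-1}\subset V(H)$ your equality $e_A+\Delta_B=\binom{r+1}{2}$ is immediate and your argument goes through. So once you align your hypotheses with the paper's (or argue separately that the case where some $V_i$ with $i<t$ is not fully contained can be absorbed into the shifting step), the remainder of your proof is a valid and arguably more elegant alternative.
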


This claim will be proved in Appendix~\ref{appendix:b} using tedious but elementary calculations. We now show that we can transform $H$ into a subgraph $H'$ with $d_{H'}=d_H$ and $|V(H')\cap V_t|<|V(H)\cap V_t|$, reaching a contradiction.

Clearly, $H$ is an induced subgraph of $B_t$, so it is fully determined by its vertex set. Set $X=V(H)\cap V_s$ and $Y=V(H)\cap V_t$. W.l.o.g.,  we assume that $Y$ is \emph{the leftmost} subset of $V_t$. Further, let $W\subset Y$ be \emph{the rightmost} subset of $Y$ of size $|W|=\min(|Y|, \ell-|X|) > 0$, as $|Y|= |V(H)\cap V_t| > 0$ and $\ell-|X| > 0$ by the choice of $s$. Moreover, let $W'\subset V_s\setminus X$ be \emph{the rightmost} subset of $V_s\setminus X$ with $|W'|=|W|$. We also set $Z=Y\setminus W$. We create $H'$ from $H$ by replacing $W$ with $W'$. Observe that since $|V(H')\cap V_t|<|V(H)\cap V_t|$, it remains to show that $e(H')\ge e(H)$.

\begin{figure}
\begin{center}
	\begin{tikzpicture}[scale=0.7]
		
		\draw (0,0) -- (8,0) -- (8,1) -- (0,1) -- (0,0);
		
		\node at (4,-0.6) {$X=V(H)\cap V_s$};
		
		\node at (11,2) {$V_{s+1},\ldots,V_{t-1}$};
		\node at (4,2) {$V_{s}$};
		\node at (18,2) {$V_{t}$};
		
		\node at (-0.8,0.5) {$H\!\!:$};
		\node at (-0.8,-5.5) {$H'\!\!:$};
		
		\node at (11,0.5) {$\dots$};
		\node at (11,-5.5) {$\dots$};
		
		\filldraw [black] (0.5,0.5) circle (1pt);
		\filldraw [black] (1,0.5) circle (1pt);
		\filldraw [black] (2,0.5) circle (1pt);
		\filldraw [black] (3,0.5) circle (1pt);
		\filldraw [black] (3.5,0.5) circle (1pt);
		\filldraw [black] (4,0.5) circle (1pt);
		\filldraw [black] (4.5,0.5) circle (1pt);
		\filldraw [black] (6,0.5) circle (1pt);
		\filldraw [black] (6.5,0.5) circle (1pt);
		\filldraw [black] (7,0.5) circle (1pt);
		\filldraw [black] (7.5,0.5) circle (1pt);
		
		\draw (0,-3) -- (8,-3) -- (8,-2) -- (0,-2) -- (0,-3);
		
		\filldraw [black] (1.5,-2.5) circle (1pt);
		\filldraw [black] (2.5,-2.5) circle (1pt);
		\filldraw [black] (5,-2.5) circle (1pt);
		\filldraw [black] (5.5,-2.5) circle (1pt);
		
		\node at (4,-3.6) {$W',|W'|=|W|$};
		
		\draw (0,-6) -- (8,-6) -- (8,-5) -- (0,-5) -- (0,-6);
		\draw (14,0) -- (22,0) -- (22,1) -- (14,1) -- (14,0);
		\draw (14,-6) -- (22,-6) -- (22,-5) -- (14,-5) -- (14,-6);
		\draw[dashed, ultra thin] (17.75,0) -- (17.75,1);
		\draw[dashed, ultra thin] (19.75,0) -- (19.75,1);
		\draw[dashed, ultra thin] (17.75,-6) -- (17.75,-5);
		\draw[dashed, ultra thin] (2.25,-6) -- (2.25,-5);
		
		\draw [decoration={brace,amplitude=0.5em},decorate]
		(17.75,-0.1) -- (14,-0.1);
		\node at (15.87,-0.8) {$Z$};
		
		\draw [decoration={brace,amplitude=0.5em},decorate]
		(17.75,-6.1) -- (14,-6.1);
		\node at (15.87,-6.8) {$Z$};
		
		\draw [decoration={brace,amplitude=0.5em},decorate]
		(19.75,-0.1) -- (17.75,-0.1);
		\node at (18.75,-0.8) {$W$};
		
		\draw [decoration={brace,amplitude=0.5em},decorate]
		(19.75,-1.2) -- (14,-1.2);
		\node at (16.87,-1.9) {$Y=V(H)\cap V_t$};
		
		\draw [decoration={brace,amplitude=0.5em},decorate]
		(2.25,-6.1) -- (0,-6.1);
		\node at (1.12,-6.8) {$U,|U|=|W'|$};
		
		\draw [decoration={brace,amplitude=0.5em},decorate]
		(8,-6.1) -- (2.25,-6.1);
		\node at (5.12,-6.8) {$\vec X, |\vec X|=|X|$};

		\filldraw [black] (14.5,0.5) circle (1pt);
		\filldraw [black] (15,0.5) circle (1pt);
		\filldraw [black] (15.5,0.5) circle (1pt);
		\filldraw [black] (16,0.5) circle (1pt);
		\filldraw [black] (16.5,0.5) circle (1pt);
		\filldraw [black] (17,0.5) circle (1pt);
		\filldraw [black] (17.5,0.5) circle (1pt);
		\filldraw [black] (18,0.5) circle (1pt);
		\filldraw [black] (18.5,0.5) circle (1pt);
		\filldraw [black] (19,0.5) circle (1pt);
		\filldraw [black] (19.5,0.5) circle (1pt);
		
		\filldraw [black] (14.5,-5.5) circle (1pt);
		\filldraw [black] (15,-5.5) circle (1pt);
		\filldraw [black] (15.5,-5.5) circle (1pt);
		\filldraw [black] (16,-5.5) circle (1pt);
		\filldraw [black] (16.5,-5.5) circle (1pt);
		\filldraw [black] (17,-5.5) circle (1pt);
		\filldraw [black] (17.5,-5.5) circle (1pt);
		
		\filldraw [black] (0.5,-5.5) circle (1pt);
		\filldraw [black] (1,-5.5) circle (1pt);
		\filldraw [black] (1.5,-5.5) circle (1pt);
		\filldraw [black] (2,-5.5) circle (1pt);
		\filldraw [black] (2.5,-5.5) circle (1pt);
		\filldraw [black] (3,-5.5) circle (1pt);
		\filldraw [black] (3.5,-5.5) circle (1pt);
		\filldraw [black] (4,-5.5) circle (1pt);
		\filldraw [black] (4.5,-5.5) circle (1pt);
		\filldraw [black] (5,-5.5) circle (1pt);
		\filldraw [black] (5.5,-5.5) circle (1pt);
		\filldraw [black] (6,-5.5) circle (1pt);
		\filldraw [black] (6.5,-5.5) circle (1pt);
		\filldraw [black] (7,-5.5) circle (1pt);
		\filldraw [black] (7.5,-5.5) circle (1pt);
		
	\end{tikzpicture}
\end{center}
\caption{Creating $H'$ from $H$ by replacing $W$ with $W'$. (Here $|W| = \min(|Y|, \ell-|X|) = \ell-|X|$.)}
\end{figure}
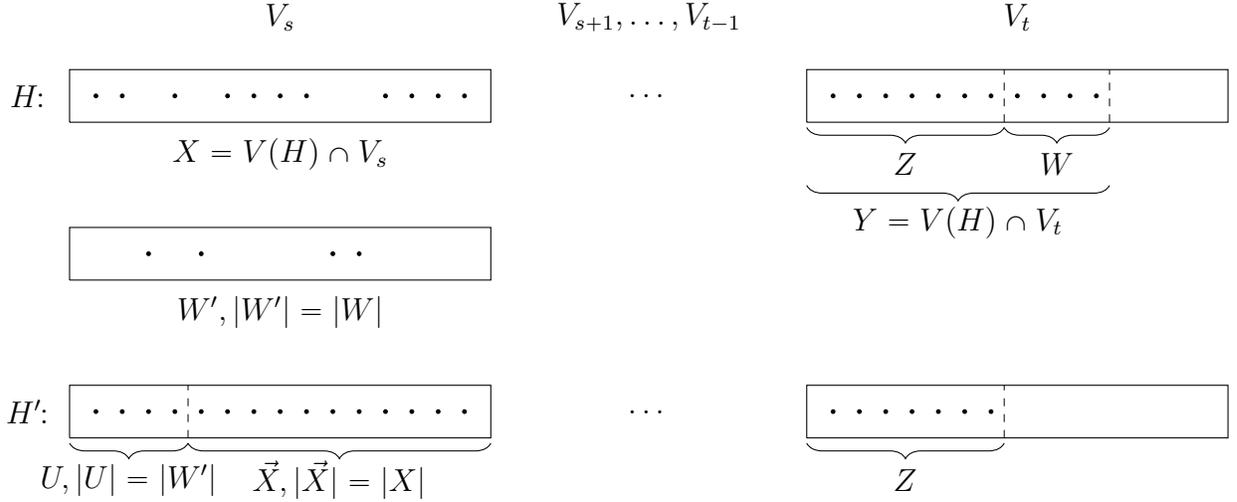

We consider two cases with respect to the value of $s$. For a vertex $v\in V$ and two disjoint subsets $U$ and $W$ of $V$ we denote by $e(U,W)$ the number of edges of $B_t$ with one endpoint in $U$ and the other in $W$, and we denote by $\deg(v,U)$ the number of edges of $B_t$ of the form $vu$, where $u\in U$.

{\bf Case $s=t-1$:} Given the definition of $W$ and $W'$, we have
$$e(H')-e(H)\ge e(W',X\cup Z)-e(W,X\cup Z).$$
We show that the latter is nonnegative.
Let $\vec X$ be \emph{the rightmost} subset of $V_s$ with $|\vec X|=|X|$ and $U$ be \emph{the rightmost} subset of $V_s\setminus\vec X$ of size $|U|=|W'|$. Then,
$$e(W', X\cup Z)\ge e(U, \vec X\cup Z),$$
because $U$ is further (or at the same distance) from the bridge between $V_{t-1}$ and $V_t$ than $W'$ was. By the same reasoning,
$$e(W,\vec X\cup Z) \ge e(W,X\cup Z).$$
It remains to show that
\begin{equation}\label{F1}
	e(U, \vec X\cup Z)\ge e(W,\vec X\cup Z).
\end{equation}
To this end, set $|U|=|W|=q$ and let $U=\{u_{q-1},\dots,u_0\}$ and $W=\{w_0,\dots,w_{q-1}\}$. Note that the indices of vertices (which follow the order imposed on $V$) grow away from the bridge. Obviously,
$$e(U, \vec X\cup Z)=\sum_{j=0}^{q-1}\deg(u_j,\vec X\cup Z)\quad\mbox{and}\quad e(W, \vec X\cup Z)=\sum_{j=0}^{q-1}\deg(w_j,\vec X\cup Z).$$
Owing to the structure of the bridge, with $x=|\vec X|=|X|$ and $z=|Z|$,
$$\deg(u_j,\vec X\cup Z)=\min(\max(x,r-j),x+z)\ \ \mbox{and}\ \ \deg(w_j,\vec X\cup Z)=\min(\max(z,r-j),x+z).$$
Since $x\ge z$, we thus have $\deg(u_j,\vec X\cup Z)\ge \deg(w_j,\vec X\cup Z)$, $j=0,\dots,q-1$, and \eqref{F1} follows.

{\bf Case  $s\le t-2$:} Now
$$e(H')-e(H)\ge e(W', X\cup V_{s+1})- e(W,Z\cup V_{t-1}),$$
and
$$e(W',X\cup V_{s+1})\ge e(U, \vec X\cup V_{s+1})\ge e(W,Z\cup V_{t-1}),$$
where the first inequality can be shown as above, and the second inequality follows from $|X|\geq |Z|$. This completes the proof.
\end{proof}

\section{Proof of Theorem \ref{thm:6&9}}


	
In order to obtain the upper bounds on $\bar d_{6}(n)$ and $\bar d_{9}(n)$,  it is enough to apply Theorem~\ref{thm:upper} with, respectively,  $m=6$, $\ell=4$, and $m=9$, $\ell=6$.

Let us then focus on the lower bounds. We are going to use a similar strategy as in the proof of Theorem~\ref{thm:lower}, where the existence of the $m$-th power of a Hamiltonian cycle implied the existence of a too dense subgraph of $G(n,p)$. Here, however, we have to bound the number of edges more carefully.

The following terminology will be useful.
Given an $m$-path $P$ and a subset $Z\subset V(P)$, we say that  $u,v\in Z$ form a \emph{$t$-far $ZZ$-pair in $P$} if there are in $P$ exactly $t-1$ vertices from $Z$ between $u$ and~$v$. If $\{u,v\}$ happens to be an edge of $P$, we then call it a \emph{$t$-far $ZZ$-edge of $P$}.


\bigskip

{\bf Case} $m=6$: Given $\mu$, let $\eps>0$ be any constant such that $\eps\le1/12$ and
\begin{equation}\label{eq:m6:eps}
\frac{9}{16\eps} > \frac{32}{9 \mu} + 8.
\end{equation}

Let $G_{\eps}$ be as in Definition~\ref{dfn:G_eps},
$p=n^{-4/9-\mu}$, and  $H=G_{\eps}\cup G(n,p)$.
 We define three  events:
 \begin{enumerate}
 \item[$\mathcal C$] -- $H$ contains the 6th power of a Hamiltonian cycle,
  \item[$\mathcal K$] -- the number of copies of $K_5$ in $G(n,p)$ is no more than $n^{5/9}$,
  \item[$\mathcal S$] -- there is in $G(n,p)$ a subgraph $F$ with $L=\lceil\tfrac1{4\eps}\rceil$ vertices and $M=\lceil\frac{9L}{4}\rceil-8$ edges.
  \end{enumerate}

  Clearly,
  \begin{equation}\label{CKS}
  \Prob(\mathcal C)\le\Prob({\mathcal C}\cap{\mathcal K})+\Prob(\neg{\mathcal K}).
  \end{equation}
  The main step of the proof will be to show that ${\mathcal C}\cap{\mathcal K}$ implies $\mathcal S$. Taking this for granted, one can quickly complete the proof. Indeed, as $p=o(n^{-4/9})$, the expected number of copies of $K_5$ in $G(n,p)$ is less than $n^5p^{10}=o(n^{5/9})$, and so, by Markov's inequality,  $\Prob(\neg{\mathcal K})=o(1)$. Similarly,  using~\eqref{eq:m6:eps}, the expected number of subgraphs of $G(n,p)$ with $L$ vertices and $M$ edges can be bounded from above by
\[
\binom{\binom L2}Mn^Lp^{M} = O\left(n^L n^{\left( -\frac{4}{9}-\mu \right) \left( \frac{9L}{4}-8\right)}\right)
= O\left(n^{\frac{32}{9}+8\mu - \frac{9}{4}L\mu}\right)
\le O\left(n^{\frac{32}{9}+8\mu - \frac{9}{16 \eps}\mu}\right) = o(1).
\]
  Thus, another application of Markov's inequality yields $\Prob({\mathcal S})=o(1)$.

  It remains to show  the inclusion ${\mathcal C}\cap{\mathcal K}\subset\mathcal S$. Let $C$ be the 6th power of a Hamilton cycle given by the event $\mathcal{C}$.
After removing from $C$ all vertices from $U\cup W$, as well as at least one vertex from each copy of at most $n^{5/9}\le \lfloor\eps n\rfloor$ copies of $K_5$ in $G(n,p)$, we obtain a~subgraph of $C$ of order at least $n-3\lfloor\eps n\rfloor$, which is a collection of at most $3\lfloor\eps n\rfloor$ 6-paths. Hence, at least one of them must have at least
\[
\frac{n-3\lfloor\eps n\rfloor}{3\lfloor\eps n\rfloor}
\ge \frac1{3\eps} - 1\ge \frac1{4\eps}
\]
vertices. (Here, the last inequality follows from the assumption $\eps \le 1/12$.)

Fix one such 6-path $P$ on $L=\lceil \frac1{4\eps}\rceil$ vertices which we relabel  as $V(P)=\{1,2,\dots,L\}$ in the order of their appearance on $P$.
Set $A=V(P)\cap X$ and $B=V(P)\cap Y$, and notice that $F=P[A]\cup P[B]\subset G(n,p)$ is $K_5$-free.
We are going to derive a few observations regarding the structure of $F$.

First, observe that both $P[A]$ and $P[B]$ contain 2-paths as spanning subgraphs. Indeed, let $u,v\in A$, $u<v$, be a $t$-far AA-pair, $t\in\{1,2\}$. As $P$ is a 6-path, it suffices to show that $v-u\le 6$. Suppose for contradiction that $v-u\ge 7$. Then, there are at least 5 vertices in $B$ between $u$ and $v$ on $P$, and  the first 5 of them,  say $w_1,\dots,w_5$, as they satisfy $w_5-w_1\le 6$, induce a copy of $K_5$ in $P[B]$, a contradiction. Hence, $P[A]$ and (by symmetry) $P[B]$ each contain a spanning 2-path.

Since the 2-path on $s$ vertices has precisely $2s-3$ edges, we conclude that the number of 1-far and 2-far edges in $F$ is
\begin{equation}\label{2L6}
\left(2|A|-3\right) + \left(2|B|-3\right) = 2L-6.
\end{equation}

Next, we are going to bound the number of 3-far edges in $F$.
Let $R$ be a segment of any 7 consecutive vertices from $P$. Since there is no $K_5$ in $F$, either $|R \cap A| = 4$ and $|R \cap B| = 3$, or $|R \cap A| = 3$ and $|R \cap B| = 4$. So, there is a 3-far $AA$-edge or 3-far $BB$-edge within $R$.
Since there are $L-6$ consecutive segments $R$ on 7 vertices and a given 3-far edge may be contained in at most four such segments, we infer that the number of 3-far edges in $F$  is at least
\begin{equation}\label{L4}
\frac{L-6}{4} \ge \frac{L}{4} - 2.
\end{equation}


Estimate \eqref{L4}, together with \eqref{2L6}, implies that  $|F|\ge\lceil\frac{9L}{4} \rceil - 8 = M$. Thus, the event $\mathcal S$ holds which finishes the proof of Theorem~\ref{thm:6&9} for $m=6$.

\bigskip


{\bf Case} $m=9$: The proof is very similar to the previous case.
Given $\mu>0$, let $p=n^{-2/7-\mu}$ and $\eps>0$ satisfy \begin{equation}\label{eq:m9:eps}
\frac{7}{8\eps} > \frac{10}{7 \mu} + 5.
\end{equation}
Let again $G_{\eps}$ be as in Definition~\ref{dfn:G_eps}. This time we define events
\begin{enumerate}
 \item[$\mathcal C$] -- $H$ contains the 9th power of a Hamiltonian cycle,
  \item[$\mathcal K$] -- the number of copies of $K_7$ in $G(n,p)$ is no more than $n/\log n$,
  \item[$\mathcal S$] -- there is in $G(n,p)$ a subgraph $F$ with $L=\lceil\tfrac1{4\eps}\rceil$ vertices and $M=\lceil\frac{7L}2\rceil-5$ edges.
  \end{enumerate}
An analog of \eqref{CKS} still holds.
The expected number of copies of~$K_7$ in $G(n,p)$ is \newline $O(n^{1-27\mu})=o(n/\log n)$, so, by Markov's inequality, $\Prob(\neg{\mathcal K})=o(1)$. Similarly,  using~\eqref{eq:m9:eps}, on can show that also $\Prob({\mathcal S})=o(1)$. It remains to show that ${\mathcal C}\cap{\mathcal K}$ implies $\mathcal S$.

Suppose that $G_{\eps}\cup G(n,p)$ contains a subgraph $C$ which is the 9th power of a~Hamiltonian cycle.
As before, after removing from $G_{\eps}\cup G(n,p)$ all vertices from $U\cup W$ as well as at least one vertex from each copy of $K_7$ in $G(n,p)$, we obtain a 9-path $P$ on $L=\lceil \frac1{4\eps}\rceil$ vertices. Let $A,B$ and $F$ be as before and let the vertices of $P$ be relabeled as $V(P)=\{1,\dots,L\}$.

Benefiting from the absence of $K_7$ in $F$, it can be shown, similarly to the case $m=6$, that both $P[A]$ and $P[B]$ contain spanning 3-paths. Indeed, let $u,v\in A$, $u<v$, be a $t$-far AA-pair, $t\in\{1,2,3\}$.  Suppose that $v-u\ge 7$. Then, there are at least 7 vertices in $B$ between $u$ and $v$ on $P$, and, consequently, a clique $K_7$ in $P[B]$, a contradiction. Hence, $P[A]$ and (by symmetry) $P[B]$ each contain a spanning 3-path.
This yields   $3L-12$ $t$-far edges in $F$ with $t\le3$. We are going to show that there are another $L/2$  edges in $F$  (4-far and 5-far).

Let $R$ be a segment of any 10 consecutive vertices from $P$. Since there is no $K_7$ in $F$, $R$ contains  5 vertices from each set $A$ and $B$, or 6 vertices from one of them (and 4 from the other). In either case there are two 4-far edges within $R$.
As there are $L-9$ segments of length 10 and each 4-far edge may belong to at most  6 of them, the total number $w$ of 4-far edges in $F$ satisfies
$$w\ge \frac{2(L-9)}6=\frac L3-3.$$

This is not quite enough, but
there may also be 5-far edges in $F$. Let us denote their number by $z$. Note that a very small value of $z$ can boost our bounds of $w+z$ even higher than a large one. In particular, it is easy to check that if $z=0$, then  $w=L-9$. So, we have to optimize by relating $w$ with $z$. Let $w_A,w_B, z_A,z_B$ be the numbers of 4-far and 5-far edges in, resp., $P[A]$ and $P[B]$.

Let $u<v$ be a 4-far $AA$-pair. The only reason for it  not to be a 4-far edge is that $v-u\ge10$, in which case there is a 5-far $BB$-edge $u'v'$ with $u<u'<v'<v$. So, we can map every  4-far $AA$-pair which is not an edge of $P[A]$ to a 5-far $BB$-edge. Note that  in this mapping the pre-image of a fixed 5-far $BB$-edge has size at most 4. Thus, $|A|-4-w_A\le4z_B$, and, by a symmetric argument,  $|B|-4-w_B\le4z_A$. Summing up, we get
$L-8-w\le 4z$, and, taking into account the previous bound on $w$, we have $w\ge\max(L/3-3,L-4z-8)$. In the end, we want to minimize the quantity
$$ w+z\ge \max(L/3-3,L-4z-8)+z.$$
It is easy to check that the minimum is achieved at $z=L/6-5/4$ and equals $L/2-17/4\ge L/2-5.$ Thus, $F$ has in total at least $3L+\lceil\frac{L}2\rceil-5$ edges and the event $\mathcal S$ holds. This completes the proof of Theorem \ref{thm:6&9}. \qed


\section{Concluding remarks}\label{conrem}

\subsection*{I} Recall that in order to determine the $m$-Dirac over-threshold $\bar d_m$, we established Theorems~\ref{thm:lower} and~\ref{thm:upper}. For a fixed $\eps>0$ and a given $n$-vertex graph $G$ with $\delta(G)\ge(1/2+\eps)n$, these theorems yield bounds on the ordinary threshold probability $\hat p$ for the property $G\cup G(n,p)\in\cC^m_n$ (assuming it exists) namely
$$n^{-1/f(\ell)-\mu_1}\le \hat p\le n^{-1/f(\ell)-\mu_2},$$
where $\mu_1=\Theta(\eps)$, while $\mu_2=\mu_2(\eps)$ is an implicit function of $\eps$ with $\lim_{\eps\to0}\mu_2(\eps)=0$ -- cf. \eqref{epsi} and, respectively, the proof of Theorem \ref{thm:upper}.
It would be nice, but probably extremely difficult, to close the gap.

\begin{prob}
For all integers $2\le\ell\le m-1$ such that $\ell<(m-\ell)(m-\ell+1)$ and for all $\eps>0$
find a function $\mu=\mu(\eps)>0$ such that for all $n$-vertex graphs $G$ with $\delta(G)\ge(1/2+\eps)n$
$$\lim_{n\to\infty}\Prob(G\cup G(n,p)\in{\mathcal C}^m_n)=\begin{cases}1\quad \mbox{if}\quad p \gg n^{-1/f(\ell)-\mu},\\0\quad \mbox{if}\quad p \ll n^{-1/f(\ell)-\mu}.\end{cases}$$
\end{prob}
\noindent (Here $a_n\ll b_n$ means $a_n=o(b_n)$, while $a_n\gg b_n$ means $b_n=o(a_n)$.)

\subsection*{II} One may wonder what is so special about the cases $m=2,3,4,8$ that just for them we have obtained the standard Dirac-threshold $d_m(n)$, while for all other values of $m$ we ended up with the Dirac over-threshold $\bar d_m(n)$. A humorous answer could be that it is yet another instance of the \emph{Law of Small Numbers}: whatever seems to be true for small instances of a parameter, fails to hold in general. Still, in this subsection we attempt to provide a more intelligent answer.

Altogether, we have had at our disposal four statements and one ad hoc technique to deduce upper and lower bounds on the order of magnitude of $p=p(n)$ which guarantees the presence of a Hamiltonian $m$-cycle in $G\cup G(n,p)$, where $G$ is a Dirac graph: Theorems \cite[Theorem 1.2]{ADRRS} and \ref{thm:upper} for upper bounds, Theorems \cite[Theorem 1.3]{ADRRS} and \ref{thm:lower} for lower bounds, the latter sometimes strengthened by forcing some extra edges in $G(n,p)$.

For $m=2,3,4$, the pair of theorems from \cite{ADRRS} suffices to pinpoint the threshold $d_m$. In particular, the lower bounds follow by observing that a Hamiltonian $m$-cycle in $G_\eps\cup G(n,p)$ must have a linear number of edges ($m=2,3$) or triangles ($m=4$) in $G(n,p)$ alone which is unlikely as $p\le c n^{-1}$ ($m=2,3$) or $p\le cn^{-2/3}$ ($m=4$). For $m=8$, \cite[Theorem 1.3]{ADRRS} does not apply, yet we have an ad hoc remedy: for $p\le cn^{-1/3}$ there are a.a.s. no copies of $K_6$ in $G(n,p)$ which implies that there have to be long 3-paths with linear number of extra 4-edges which, again, is very unlikely.

With three exceptions, for all other $m$ we have $\ell_m<(m-\ell_m)(m-\ell_m+1)$ (c.f. Proposition \ref{prop:lr_ineq}) and the pair of Theorems \ref{thm:upper} and \ref{thm:lower} ``squeezes out'' an over-threshold $\bar d_m$. Why not $d_m$? On one hand, because for such $\ell_m$ the corresponding braids are balanced (c.f. Proposition~\ref{prop:lr_ineq}) and, on the other hand, because $p$ has to be small enough to ``eliminate''  dense paths from $G(n,p)$ (c.f. Lemma \ref{lem:pathedges}).

The three remaining cases ($m=5,6,9$), at least in principle, could go either way. Let us focus on $m=5$ as the other two  are quite similar (though a bit more involved).
We have $\ell_5=4$ and $f(\ell_5)=7/4$, while $f(3)=2$ is the runner-up. Thus, Theorems \ref{thm:upper} and \ref{thm:lower} imply, respectively, that $\bar d_5(n)\le n^{-1/2}$ and $\bar d_5(n)\ge n^{-4/7}$. At this point it is still possible that for some $4/7<\tau<1/2$ we have a Dirac threshold $d_5(n)=n^{-\tau}$. However, it turned out that the lower bound can be dramatically improved to match the upper bound and thus to establish $\bar d_5(n)=n^{-1/2}$. Again, this improvement was possible by analyzing the structure of $G_\eps\cup G(n,p)$ and showing that in $G(n,p)$ alone either there are linearly many copies of $K_4$ or a long 2-path, both quite unlikely when $p\le n^{1/2-\mu}$. Finally, note that the over-thresholds for $m=5,6,9$ are \emph{not} of the form $n^{-1/f(\ell_m)}$.

For the convenience of the reader, we have summarized the above discussion in Table~\ref{table:bottom_ten}.

\setlength{\tabcolsep}{2pt}
\renewcommand{\arraystretch}{1.7}
\definecolor{Gray1}{RGB}{200,200,200}
\definecolor{Gray2}{RGB}{230,230,230}

\begin{table}[h]
{\footnotesize{
  \begin{tabular}{ | C{0.35cm} | C{0.35cm} | C{0.35cm} | c | c | c | c | c | c | c | l |}
    \hline
    \rowcolor{Gray1}
    $m$ & $\ell$ & $r$ & $f(\ell)$ & $f(\ell-1)$ & Thm 1.2\,\cite{ADRRS} & Thm 1.3\,\cite{ADRRS} & Thm 1.2 & Thm 1.1 & \parbox{2cm}{Dense\\[-5pt] structure\\[-5pt] in $G(n,p)$} & Threshold \\ \hline
    2 & 2 & 0 & n/a & n/a & \cellcolor{Gray2}{$\le n^{-1}$} & \cellcolor{Gray2}{$\ge n^{-1}$} & -- & -- & \parbox{2cm}{$\Theta(n)$ edges\\[-5pt] $\ge cn^{-1}$ } & $d_2=n^{-1}$ \\ \hline
    3 & 2 & 1 & n/a & n/a & \cellcolor{Gray2}{$\le n^{-1}$} & \cellcolor{Gray2}{$\ge n^{-1}$} & -- & -- & \parbox{2cm}{$\Theta(n)$ edges\\[-5pt] $\ge cn^{-1}$ } & $d_3=n^{-1}$ \\ \hline
    4 & 3 & 1 & n/a & n/a & \cellcolor{Gray2}{$\le n^{-2/3}$} & \cellcolor{Gray2}{$\ge n^{-2/3}$} & -- & -- & \parbox{2.2cm}{$\Theta(n)$ triangles\\[-5pt] $\ge cn^{-2/3}$ } & $d_4=n^{-2/3}$ \\ \hline
    5 & 4 & 1 & \circled{$\frac{7}{4}$} & 2 & $\le n^{-1/2}$ & -- &\cellcolor{Gray2}{$\le n^{-1/2-\mu}$} & $\ge n^{-4/7-\mu}$ & \cellcolor{Gray2}{\parbox{2cm}{2-path\\[-5pt] $\ge n^{-1/2-\mu}$ }} & $\bar{d}_5=n^{-1/2}$ \\ \hline
    6 & 5 & 1 & \circled{$\frac{11}{5}$} & $\frac{9}{4}$ & $\le n^{-2/5}$ & -- &\cellcolor{Gray2}{$\le n^{-4/9-\mu}$} & $\ge n^{-5/11-\mu}$ & \cellcolor{Gray2}{\parbox{2cm}{2-path,\\[-5pt] 3-far edges\\[-5pt] $\ge n^{-4/9-\mu}$ }} & $\bar{d}_6=n^{-4/9}$ \\ \hline
    7 & 6 & 1 & $\frac{8}{3}$ & \circled{$\frac{13}{5}$} & $\le n^{-1/3}$ & -- &\cellcolor{Gray2}{$\le n^{-5/13-\mu}$} & \cellcolor{Gray2}{$\ge n^{-5/13-\mu}$} & not needed & $\bar{d}_7=n^{-5/13}$ \\ \hline
    8 & 6 & 2 & \circled{\;\,3\;\,} & $\frac{16}{5}$ & \cellcolor{Gray2}{$\le n^{-1/3}$} & -- &$\le n^{-5/16-\mu}$ & $\ge n^{-1/3-\mu}$ & \cellcolor{Gray2}{\parbox{2cm}{3-path,\\[-5pt] 4-far edges\\[-5pt] $\ge cn^{-1/3}$ }} & $d_8=n^{-1/3}$ \\ \hline
    9 & 7 & 2 & \circled{$\frac{24}{7}$} & $\frac{7}{2}$ & $\le n^{-2/7}$ & -- & \cellcolor{Gray2}{$\le n^{-2/7-\mu}$} & $\ge n^{-7/24-\mu}$ & \cellcolor{Gray2}{\parbox{2cm}{3-path,\\[-5pt] 4-,5-far edges\\[-5pt] $\ge n^{-2/7-\mu}$}} & $\bar{d}_9=n^{-2/7}$ \\ \hline
    10 & 8 & 2 & $\frac{31}{8}$ & \circled{$\frac{27}{4}$} & $\le n^{-1/4}$ & -- &\cellcolor{Gray2}{$\le n^{-4/27-\mu}$} & \cellcolor{Gray2}{$\ge n^{-4/27-\mu}$} & not needed & $\bar{d}_{10}=n^{-4/27}$ \\ \hline
  \end{tabular}
  }}
  \caption{Summary of all thresholds for $2\le m \le 10$. Here $\ell$ is the smallest integer satisfying $\ell\ge r(r+1)$, where $r=m-\ell$; circled fractions correspond to $\ell_m$; shaded boxes indicate which theorems and ad hoc techniques (column ``Dense structure in $G(n,p)$'') determine the threshold.}
  \label{table:bottom_ten}
\end{table}

\subsection*{III}
This paper is exclusively devoted to the classical Dirac case $k=1$. However, as it was already mentioned in Introduction, in~\cite[Theorems 1.3 and 1.5]{ADRRS} the authors determined the usual $(k,m)$-Dirac thresholds $d_{k,m}=n^{-2/\ell}$ for  every $k\ge 1$ and all $m$ falling into the interval $(k+1)(\ell-1)\le m\le k\ell+(\sqrt{4\ell-1}-1)/2$ (for a fixed $k$ there is only a finite number of feasible choices of $m$), as well as $d_{1,8}=d_{2,14}=n^{-1/3}$.
We believe that, as in the case $k=1$ (c.f. discussion in Subsection II above), for all other pairs $(k,m)$, the $(k,m)$-Dirac thresholds $d_{k,m}$ do not exist and should be replaced by the over-thresholds~$\bar d_{k,m}$.

\begin{prob}\label{62}
Determine the over-threshold $\bar d_{k,m}$ for all pairs $(k,m)$, $k\ge2$, not covered in \cite{ADRRS}.
\end{prob}

We anticipate that the main difficulty in generalizing our results to  arbitrary $k\ge1$ would be, again, to formulate and prove
a $k$-analog of Lemma~\ref{lem:pathedges}.

\subsection*{Acknowledgement} We are extremely grateful to both referees for their  meticulous reading of the manuscript and numerous invaluable comments which have led to a  substantial improvement of our submission. We would also like to thank Christian Reiher for quickly rebutting  a conjecture we used to have with respect to Problem \ref{62}.






\begin{bibdiv}
	\begin{biblist}
		
		\bib{ADRRS}{article}{
			author={Antoniuk, S.},
			author={Dudek, A.},
			author={Reiher, Chr.},
			author={Ruci\'nski, A.},
			author={Schacht, M.},
			title={High powers of Hamiltonian cycles in randomly augmented graphs},
			journal={J. of Graph Theory},
			volume={98},
			number={2},
			date={2021},
			pages={255--284},
		}
		
		\bib{BFM2003}{article}{
			author={Bohman, Tom},
			author={Frieze, Alan},
			author={Martin, Ryan},
			title={How many random edges make a dense graph Hamiltonian?},
			journal={Random Structures Algorithms},
			volume={22},
			date={2003},
			number={1},
			pages={33--42},
			issn={1042-9832},
			review={\MR{1943857}},
		}
		
		\bib{BPSS2022}{article}{
			author={B\"ottcher, J.},
			author={Parczyk, O.},
			author={Sgueglia, A.},
			author={Skokan, J.},
			title={The square of a Hamilton cycle in randomly perturbed graphs},
			journal={arXiv:2202.05215},
			date={2022},
		}
		
		\bib{Dirac}{article}{
   			author={Dirac, G. A.},
   			title={Some theorems on abstract graphs},
   			journal={Proc. London Math. Soc. (3)},
   			volume={2},
   			date={1952},
   			pages={69--81},
   			issn={0024-6115},
   			review={\MR{47308}},
		}
		
		\bib{DRRS}{article}{
			author={Dudek, A.},
			author={Reiher, Chr.},
			author={Ruci\'nski, A.},
			author={Schacht, M.},
			title={Powers of Hamiltonian cycles in randomly augmented graphs},
			journal={Random Structures Algorithms},
			volume={56},
			number={1},
			date={2020},
			pages={122--141},
		}
		
		\bib{JLR}{book}{
			author={Janson, Svante},
			author={\L uczak, Tomasz},
			author={Ruci\'nski, Andrzej},
			title={Random graphs},
			series={Wiley-Interscience Series in Discrete Mathematics and
				Optimization},
			publisher={Wiley-Interscience, New York},
			date={2000},
			pages={xii+333},
			isbn={0-471-17541-2},
			review={\MR{1782847}},
		}
		
		\bib{KSS1996}{article}{
   			author={Koml\'{o}s, J\'{a}nos},
   			author={S\'{a}rk\"{o}zy, G\'{a}bor N.},
			author={Szemer\'{e}di, Endre},
			title={On the square of a Hamiltonian cycle in dense graphs},
   			booktitle={Proceedings of the Seventh International Conference on Random
   			Structures and Algorithms (Atlanta, GA, 1995)},
   			journal={Random Structures Algorithms},
   			volume={9},
   			date={1996},
   			number={1-2},
   			pages={193--211},
   			issn={1042-9832},
   			review={\MR{1611764}},
		}
		
		\bib{KSS1998}{article}{
   			author={Koml\'{o}s, J\'{a}nos},
   			author={S\'{a}rk\"{o}zy, G\'{a}bor N.},
   			author={Szemer\'{e}di, Endre},
   			title={Proof of the Seymour conjecture for large graphs},
   			journal={Ann. Comb.},
   			volume={2},
   			date={1998},
   			number={1},
   			pages={43--60},
   			issn={0218-0006},
   			review={\MR{1682919}},
		}
		
		\bib{NT}{article}{
			author={Nenadov, Rajko},
			author={Truji\'{c}, Milo\v{s}},
			title={Sprinkling a few random edges doubles the power},
			journal={SIAM J. Discrete Math.},
			volume={35},
			date={2021},
			number={2},
			pages={988--1004},
			issn={0895-4801},
			review={\MR{4259197}},
		}
		
	\end{biblist}
\end{bibdiv}


\appendix

\section{Proof of Proposition~\ref{prop:lr_ineq}}\label{appendix:a}

First we check by hand the statement for $m\in\{7,10,11,\dots,14\}$ (see Table~\ref{table:inequalities}).
	\renewcommand{\arraystretch}{1}
	\begin{table}
		\begin{tabular}{ |>{\columncolor{codelightgray}}c| C{1cm} | C{1cm} | C{1cm} | C{1cm} | C{1cm} | C{1cm} |}
			\hline
			\rowcolor{codelightgray}
			$m$ & 7 & 10 & 11 & 12 & 13 & 14\\ \hline
			$\lambda_m$ & $2\sqrt{7}$ & $\sqrt{55}$ & $\sqrt{66}$ & $\sqrt{78}$ & $\sqrt{91}$ & $\sqrt{105}$\\ \hline
			$\lfloor \lambda_m \rfloor$ & 5 & 7 & 8 & 8 & 9 & 10\\ \hline
			$\lceil \lambda_m \rceil$ & 6 & 8 & 9 & 9 & 10 & 11\\ \hline
			$f_m(\lfloor \lambda_m \rfloor)$ & $\frac{13}{5}$ & $\frac{27}{7}$ & $\frac{17}{4}$ & $\frac{19}{4}$ & $\frac{46}{9}$ & $\frac{11}{2}$\\ \hline
			$f_m(\lceil \lambda_m \rceil)$ & $\frac{8}{3}$ & $\frac{31}{8}$ & $\frac{13}{3}$ & $\frac{14}{3}$ & $\frac{51}{10}$ & $\frac{61}{11}$ \\ \hline
			\rowcolor{codeverylightgray}
			$\ell_m$ & 5 & 7 & 8 & 9 & 10 & 10\\ \hline
			$r_m=m-\ell_m$ & 2 & 3 & 3 & 3 & 3 & 4\\ \hline
			\rowcolor{codeverylightgray}
			$r_m(r_m+1)$ & 6 & 12 & 12 & 12 & 12 & 20\\ \hline
		\end{tabular}
		\caption{Verifying inequality $\ell_m<(m-\ell_m)(m-\ell_m+1)$ for $m\in\{7,10,11,\dots,14\}$.}
		\label{table:inequalities}
	\end{table}
	As for $m\ge 15$, we will show that both
	\begin{equation}\label{eq:fact12:1}
		\lfloor\lambda_m\rfloor < (m-\lfloor\lambda_m\rfloor)(m-\lfloor\lambda_m\rfloor+1)
	\end{equation}
	and
	\begin{equation}\label{eq:fact12:2}
		\lceil\lambda_m\rceil < (m-\lceil\lambda_m\rceil)(m-\lceil\lambda_m\rceil+1).
	\end{equation}

	Since $\lfloor\lambda_m\rfloor \le \lambda_m$, inequality~\eqref{eq:fact12:1} will follow from
	\[
	\lambda_m < (m-\lambda_m)(m-\lambda_m+1).
	\]
	Observe that, since $\lambda_m=\sqrt{2m^2+2m}/2$, we have
	\[
	(m-\lambda_m)(m-\lambda_m+1) - \lambda_m
	=  \left( \frac{3m}{2} -  \sqrt{2m(m+1)}\right) (m+1).
	\]
	Thus, since $\frac{3m}{2} -  \sqrt{2m(m+1)} > 0$ for $m\ge 9$, the inequality \eqref{eq:fact12:1} holds.
	
	Next, since $\lceil\lambda_m\rceil < \lambda_m+1$, inequality \eqref{eq:fact12:2} will follow from
	\[
	\lambda_m+1 \le (m-\lambda_m-1)(m-\lambda_m).
	\]
	Again, after some calculations, we get that
	\[
	(m-\lambda_m-1)(m-\lambda_m) - \lambda_m -1
	=  \left( \frac{3m}{2} -  \sqrt{2m(m+1)} -\frac{1}{2}\right)m -1.
	\]
	Due to the arithmetic-geometric mean inequality we obtain that $\sqrt{m(m+1)} \le (2m+1)/2$. Consequently,
	\begin{align*}
		\left( \frac{3m}{2} -  \sqrt{2m(m+1)} -\frac{1}{2}\right)m -1 &\ge
		\left( \frac{3m}{2} -  \sqrt{2}\cdot \frac{2m+1}{2} -\frac{1}{2}\right)m -1\\
		&= \left( \frac{3}{2}-\sqrt{2}\right)m^2 - \frac{\sqrt{2}+1}{2} m -1.
	\end{align*}
	One can easily check that the latter quadratic function of $m$ is positive for $m\ge 15$. This completes the proof of~\eqref{eq:fact12:2}. \qed

\section{Proof of Claim~\ref{claim:s_less_t}}\label{appendix:b}

We can assume that $r<\ell-1$, as the case $\ell\in\{r,r+1\}$ was already discussed at the beginning of the proof of Proposition~\ref{<d}. Recall that $t\geq 2$. Suppose that $s=t$, that is $V_i\subset H$ for $i=1,2,\dots,t-1$, and set $x=|V(H)\cap V_t|$. Notice that the maximum number of edges with at least one endpoint in $V(H)\cap V_t$ is achieved when all $x$ vertices form the leftmost subset of~$V_t$. Therefore, we assume this throughout.

{\bf Case $0 \leq x \leq r$:} Our aim is to show that
\[d_t = \frac{t\binom \ell 2 + (t-1)\binom{r+1}2}{t\ell-1} > \frac{(t-1){\binom \ell 2} + (t-2){\binom {r+1} 2} + rx}{(t-1)\ell + x-1} = d_H,\]
where the second equality uses the fact that by our assumptions $H$ consists of $t-1$ copies of $K_\ell$, such that each consecutive two of them are joined by an $r$-bridge and there is an additional $x$-tuple of vertices in $V_t$ in which each vertex has exactly $r$ neighbours to the left of it in $H$.

Consider the function
\begin{align*}
	f(\ell,r,t,x) = \left(t\binom \ell 2 + (t-1)\binom{r+1}2 \right)\left((t-1)\ell + x-1\right) \\
	- \left((t-1){\binom \ell 2} + (t-2){\binom {r+1} 2} + rx\right)\left(t\ell-1 \right).
\end{align*}

If we show that for $\ell, r, t, x$ satisfying our assumptions $f(\ell,r,t,x)>0$, we will be done. Notice that $f(\ell,r,t,x)$ is a linear function with respect to $x$, hence it is enough to verify that $f(\ell,r,t,0)>0$ and $f(\ell,r,t,r)>0$. We have
\begin{align*}
	f(\ell,r,t,0) & = \left(t\binom \ell 2 + (t-1)\binom{r+1}2 \right)\left(\ell t-\ell-1\right) \\
	& - \left((t-1){\binom\ell 2} + (t-2){\binom{r+1}2} \right)\left(\ell t-1 \right) \\
	& = - \binom\ell 2 + \binom{r+1}2 \left(\ell-1 \right) = \frac{\ell-1}{2}\left(r(r+1)-\ell \right) > 0,
\end{align*}
since $r(r+1)>\ell$. Similarly, since $1-t<0$ and $1\le r<\ell-1$, we get
\begin{align*}
	f(\ell,r,t,r) & = \left(t\binom \ell 2 + (t-1)\binom{r+1}2 \right)\left(\ell t - \ell + r-1\right) \\
	& - \left((t-1){\binom\ell 2} + (t-2){\binom{r+1}2} + r^2\right)\left(\ell t-1 \right) \\
	& = \binom\ell 2 \left(rt - 1 \right) + \binom{r+1}2\left(rt + \ell - r -1\right) - \ell r^2 t + r^2 \\
	& = \frac12\left(\ell^2 rt - \ell^2 - \ell rt + \ell + r^3t  + \ell r^2  -r^3 -r^2 + r^2t +\ell r -r^2 -r -2\ell r^2t + 2r^2\right) \\
	& = \frac12\left((\ell r^2 - r^3) + (-\ell r^2 t+r^3t) + (\ell^2 rt-\ell r^2 t) + (-\ell rt + r^2t) + (-\ell^2 + \ell r) + (\ell-r) \right) \\
	& = \frac{\ell-r}{2}\left(r^2-r^2 t + \ell rt - rt - \ell + 1  \right) \\
	& = \frac{\ell-r}{2}\left(r^2(1-t) + (\ell-1)rt - \ell +1 \right) \\
	& > \frac{\ell-r}{2}\left((\ell-1)r(1-t) + (\ell-1)rt - \ell +1 \right) \\
	& = \frac{\ell-r}{2}(\ell-1)(r-1) \geq 0.
\end{align*}

{\bf Case $r+1 \leq x \leq \ell-1$: } This time we would like to show that
\[d_t = \frac{t\binom \ell 2 + (t-1)\binom{r+1}2}{t\ell-1} > \frac{(t-1){\binom \ell 2} + (t-1){\binom {r+1} 2} + \binom x 2}{(t-1)\ell + x-1} = d_H,\]
where the second equality uses the fact that by our assumptions $H$ consists of $t-1$ copies of $K_{\ell}$ and one copy of $K_x$, and since $x\geq r+1$ each two consecutive cliques in $H$ are joined by an $r$ bridge.

Consider the function
\begin{align*}
	g(\ell,r,t,x) = \left(t\binom \ell 2 + (t-1)\binom{r+1}2 \right)\left((t-1)\ell + x-1\right) \\
	- \left((t-1){\binom \ell 2} + (t-1){\binom {r+1} 2} + \binom x 2\right)\left(t\ell -1 \right).
\end{align*}

Once again we will show that $g(\ell,r,t,x)>0$ for $\ell,r,t,x$ satisfying our assumptions. Before we proceed, notice that
\begin{align*}
	g(\ell,r,t,x) & = \binom \ell 2 (tx-1) + \binom {r+1}2(t-1)(x-\ell) - \binom x 2 (\ell t -1)\\
	& = \frac12 \left( (\ell^2 t x - \ell^2 - \ell t x + \ell) \right. \\
	& \ \ \ \ \ + (r^2tx - \ell r^2t - r^2 x + \ell r^2 + rtx - \ell rt - r x + \ell r ) \\
	& \ \ \ \ \ + \left. (-\ell tx^2 + x^2 +\ell t x - x  ) \right) \\
	& = \frac12 \left((\ell^2tx - \ell tx^2) + (-\ell r^2t + r^2tx) + (\ell r^2 - r^2x) + (-\ell rt + rtx) \right. \\
	& \ \ \ \ \ + \left. (-\ell^2 + \ell x) + (\ell r - rx) + (-\ell x + x^2) + (\ell - x) \right) \\
	& = \frac{\ell-x}2\left(\ell tx - r^2t + r^2 - rt - \ell + r - x + 1 \right).
\end{align*}
Hence, it is enough to show that
\begin{align*}
	h(\ell,r,t,x) = \ell tx - r^2t + r^2 - rt - \ell + r - x + 1 > 0.
\end{align*}
As before, the function $h$ is linear with respect to $x$.
Furthermore, since $\ell t -1>0$ this function is increasing with respect to~$x$ and so $h(\ell,r,t,x)\ge h(\ell,r,t,r+1)$. Thus, one only needs to check whether
$h(\ell,r,t,r+1)>0$. Since $r\leq\ell-2$ and $1-t<0$, we have
\begin{align*}
	h(\ell,r,t,r+1) & = \ell t(r+1) - r^2t + r^2 - rt - \ell \\
	& = (\ell-1)rt + r^2(1-t) + \ell(t-1) \\
	& \geq (\ell-1)rt + (\ell-2)r(1-t) + \ell(t-1) \\
	& = (r+t-1)\ell + r(t-2) > 0.
\end{align*}
This completes the proof of Proposition~\ref{<d}. \qed

\end{document}